\let\originalleft\left
\let\originalright\right
\renewcommand{\left}{\mathopen{}\mathclose\bgroup\originalleft}
\renewcommand{\right}{\aftergroup\egroup\originalright}
\begin{document}

\def\ee{\varepsilon}
\def\cM{\mathcal{M}}
\def\cO{\mathcal{O}}

\newcommand{\removableFootnote}[1]{}

\newtheorem{theorem}{Theorem}[section]
\newtheorem{conjecture}[theorem]{Conjecture}
\newtheorem{lemma}[theorem]{Lemma}
\newtheorem{proposition}[theorem]{Proposition}

\theoremstyle{definition}
\newtheorem{definition}{Definition}[section]

\title{
Dimension reduction for slow-fast, piecewise-smooth, continuous systems of ODEs.
}
\author{
D.J.W.~Simpson\\\\
Institute of Fundamental Sciences\\
Massey University\\
Palmerston North\\
New Zealand
}
\maketitle

% keywords:
%		boundary equilibrium bifurcation; geometric singular perturbation theory; critical manifolds; slow manifolds; ocean circulation

% MSC codes:
% 	34E13, % Multiple scale methods
% 	34E15, % Singular perturbations, general theory
% 	34C45, % Invariant manifolds

\begin{abstract}
The limiting slow dynamics of slow-fast, piecewise-linear, continuous systems of ODEs occurs on critical manifolds that are piecewise-linear. At points of non-differentiability, such manifolds are not normally hyperbolic and so the fundamental results of geometric singular perturbation theory do not apply. In this paper it is shown that if the critical manifold is globally stable then the system is forward invariant in a neighbourhood of the critical manifold. It follows that in this neighbourhood the dynamics is given by a regular perturbation of the dynamics on the critical manifold and so dimension reduction can be achieved. If the attraction is instead non-global, additional dynamics involving canards may be generated. For boundary equilibrium bifurcations of piecewise-smooth, continuous systems, the results are used to establish a general methodology by which such bifurcations can be analysed. This approach is illustrated with a three-dimensional model of ocean circulation.
\end{abstract}

%===============================================================================
\section{Introduction}
\label{sec:intro}
\setcounter{equation}{0}

Bifurcation theory provides an elegant method by which local bifurcations of $n$-dimensional systems of ODEs
can be analysed by reducing the system to a small set of equations.
Essentially the pertinent dynamics occurs on a centre manifold
whose dimension depends only on the type of bifurcation.
The restriction of the system to the centre manifold produces a low-dimensional system
that describes the full system dynamics quantitatively \cite{Ku04}.

For piecewise-smooth ODE systems, however, bifurcations involving a switching manifold 
usually cannot be analysed in the same way because a centre manifold simply does not exist.
A vast mathematical theory exists for local bifurcations of piecewise-smooth systems \cite{DiBu08,MaLa12},
but much of it only applies to systems of one or two dimensions.

In view of the effectiveness of dimension reduction methods for smooth systems,
various limited forms of dimension reduction have been developed for piecewise-smooth systems.
Piecewise-linear (PWL) systems with an equilibrium on a switching manifold often have invariant cones.
If an invariant cone is attracting, the restriction of the system to the cone generates
a lower-dimensional system that describes the long-term dynamics \cite{Ku08,KuHo13}.
Codimension-two bifurcations of piecewise-smooth systems that involve a non-hyperbolic equilibrium,
can be partly understood by studying the dynamics on the
centre manifold of one smooth component of the system \cite{Si10}.
This approach can also be applied to maps \cite{CoDe10}.
Certain grazing bifurcations generate Poincar\'e maps with a square-root singularity
which stretches phase space in a particular direction
and so allows for a reduction to one dimension in some cases \cite{FrNo97}\removableFootnote{
Also section 4.3.2 of [DiBu08].
}.
In \cite{PaWi07} it is shown how various features of a five-dimensional impact oscillator model
allow for reduction to one dimension.
Also, at shrinking points of mode-locking regions of $n$-dimensional PWL maps,
one-dimensional centre manifolds enable reduction to a skew-sawtooth circle map \cite{Si16e}.

This paper concerns boundary equilibrium bifurcations (BEBs), also called discontinuous bifurcations,
of piecewise-smooth continuous systems at which an equilibrium collides with a switching manifold.
In a neighbourhood of a BEB, the system is PWL to leading order\removableFootnote{
PWL continuous systems are also used as mathematical models \cite{LeVa98},
particularly for circuits \cite{Ch94} and neurons \cite{Co08}.
}.
The {\em local} dynamics of a BEB are governed by the {\em global} dynamics of the corresponding PWL system.
Such dynamics can be chaotic \cite{Sp81,Si16c}, or otherwise inherently high-dimensional,
in which case dimension reduction may not be possible.

However, such global dynamics may be captured
by a lower-dimensional set of equations if the system is slow-fast.
For smooth slow-fast systems, dimension reduction is achieved
via techniques in geometric singular perturbation theory \cite{Fe79,Jo95b,Ku15}\removableFootnote{
See also \cite{Wi94}.
Originally done in \cite{Fe71}.
}.
For an $n$-dimensional system with $k$ fast variables and a time-scale separation parameter $\ee$,
the $\ee \to 0$ limit defines an $(n-k)$-dimensional critical manifold $\cM_0$.
Fenichel's theorem \cite{Fe71} tells us that if $\cM_0$ is normally hyperbolic,
then for small $\ee > 0$ there exists an $(n-k)$-dimensional, locally invariant, slow manifold $\cM_\ee$
that is an $\cO(\ee)$ perturbation of $\cM_0$ and diffeomorphic to $\cM_0$.
The dynamics on $\cM_\ee$ is consequently an $\cO(\ee)$ perturbation of the dynamics on $\cM_0$.

Fenichel's theorem can be applied to slow-fast, PWL, continuous systems,
as long as we only consider subsets of phase space that do not contain a switching manifold.
In this way we can obtain a linear, locally invariant, slow manifold $\cM_\ee$
for each linear component of the system\removableFootnote{
I have to be careful with the wording here because $\cM_\ee$ is non-unique.
}.
Each $\cM_\ee$ is aligned with the slow eigenspaces of the Jacobian matrix of the relevant component.
The system may have both stable and unstable slow manifolds $\cM_\ee$,
and canards that evolve on both manifolds \cite{PoRa11,PrTe13,PrTe16}.
Each $\cM_\ee$ can be extended to form a global invariant manifold.
This is achieved in \cite{KoGl11} in the lowest-dimensional case, $(n,k) = (2,1)$,
to explain the origin of oscillatory motion.
In higher dimensions, the slow manifolds may have a complicated global structure.
Far from the origin, two-piece PWL systems are approximately homogeneous,
and homogeneous PWL systems, which need not be slow-fast, can exhibit invariant cones \cite{CaFr05,CaFe12}.
	
The main result of this paper concerns $n$-dimensional, PWL, continuous systems
with $k$ fast variables and a time-scale separation parameter $\ee$.
On the switching manifold, the critical manifold $\cM_0$ is continuous but not normally hyperbolic.
We show that if $\cM_0$ is attracting, in a certain sense,
then, as with Fenichel's theorem, the dynamics for $\ee > 0$ is an $\cO(\ee)$ perturbation of the dynamics on $\cM_0$.
However, since it is not clear that a slow manifold $\cM_\ee$ diffeomorphic to $\cM_0$ always exists,
this is achieved by constructing an $\cO(\ee)$ neighbourhood of $\cM_0$ that is forward invariant.

We propose that this result be used to analyse BEBs
in systems for which it may not be apparent which, or even how many, variables are fast.
Generic BEBs are characterised by two sets of $n$ eigenvalues.
If, in each set, $k$ eigenvalues are much larger than the rest,
the above result tells us that the $(n-k)$-dimensional system formed from the remaining eigenvalues
should provide a good approximation to the dynamics of the full system.
	
The remainder of this paper is organised as follows.
In \S\ref{sec:trans} we introduce the ``slow-fast observer canonical form'' (SFOCF).
Whereas the observer canonical form provides a normal form for BEBs,
this new form is better suited for studying BEBs in slow-fast systems.
We derive explicit formulas for the coordinate change
from an arbitrary PWL system to the SFOCF.
In \S\ref{sec:dimred} we consider the SFOCF in the $\ee \to 0$ limit.
We describe the reduced system, the layer equations, and the critical manifold $\cM_0$,
and show how the reduced system is connected to the slow eigenvalues of the BEB.

In \S\ref{sec:mainthm} we derive sufficient conditions
for the existence of a forward invariant region near $\cM_0$ (Theorem \ref{th:mainBound}).
We also show that if global stability is not satisfied,
then orbits can diverge or exhibit complicated behaviour.
In \S\ref{sec:app} we introduce a general method by which BEBs can be analysed through dimension reduction
and apply this method to a three-dimensional model of ocean circulation.
Finally \S\ref{sec:conc} provides conclusions and an outlook for future studies.

%===============================================================================
\section{Coordinate transformations}
\label{sec:trans}
\setcounter{equation}{0}

We consider ODE systems of the form
\begin{equation}
\begin{split}
\dot{x} &= \begin{cases}
f_L(x,y;\ee), & h(x,y) \le 0, \\
f_R(x,y;\ee), & h(x,y) \ge 0,
\end{cases} \\
\dot{y} &= \begin{cases}
\ee g_L(x,y;\ee), & h(x,y) \le 0, \\
\ee g_R(x,y;\ee), & h(x,y) \ge 0,
\end{cases}
\end{split}
\label{eq:pws}
\end{equation}
where $x \in \mathbb{R}^k$ is the fast variable,
$y \in \mathbb{R}^{n-k}$ is the slow variable,
and $\ee \ge 0$ is the time-scale separation parameter.
This is a two-piece, piecewise-smooth system with switching manifold $h(x,y) = 0$.
We assume that $f_L$, $f_R$, $g_L$, $g_R$, and $h$ are twice differentiable,
and that the right-hand-side of \eqref{eq:pws} is continuous on the switching manifold.

Next we change coordinates to simplify the switching condition
and approximate \eqref{eq:pws} with a PWL system in a neighbourhood of a BEB, \S\ref{sub:beb}.
We then review the observer canonical form in \S\ref{sub:companion} and \S\ref{sub:ocf},
and introduce the SFOCF in \S\ref{sub:sfocf}.

%-------------------------------------------------------------------------------
\subsection{A general piecewise-linear form}
\label{sub:beb}

%Here reduce \eqref{eq:pws} to a simpler form subject to the assumption that
%the system has a boundary equilibrium bifurcation at the origin, $x = y = 0$, when $\mu = 0$.

Suppose $h(0,0) = 0$ and that the $n$-dimensional gradient vector $\nabla h$ is not the zero vector.
That is, locally, the switching manifold of \eqref{eq:pws}
is a smooth $(n-1)$-dimensional manifold intersecting the origin.
We further suppose that at the origin the switching manifold is not tangent to all fast directions.
That is, $\nabla h$ is non-zero in at least one of its first $k$ components.
By reordering the components of $x$, we can assume that the first component of $\nabla h$ is non-zero,
that is $\frac{\partial h}{\partial x_1} \ne 0$.
%By the inverse function theorem, we can rewrite $\hat{x}_1 = h(x,y)$ as a function of $x_1$.
%This enables us to derive a new system with $\hat{x}_1$ in place of $x_1$.
%Since $\hat{x}_1$ is a fast variable,
%this system will have the same form as \eqref{eq:pws}
%and, upon dropping the ``hat'', the switching condition is now simply $x_1 = 0$.

Let $\tilde{x} = [h(x,y),x_2,\ldots,x_k]^{\sf T}$ and $\tilde{y} = y$.
The transformation $(x,y) \to (\tilde{x},\tilde{y})$ is invertible
because $\frac{\partial h}{\partial x_1} \ne 0$.
Since $\tilde{x}_1$ is a fast variable,
the transformed system has the same slow-fast form as \eqref{eq:pws}
except the switching manifold is simply $\tilde{x}_1 = 0$\removableFootnote{
This should all be okay if $h$ is $C^2$.
However, if $f_L,\ldots,g_R$ are $C^K$ ($K \ge 2$) in the original system,
they are only $C^2$ in the transformed system, I think.
}.

Now suppose that the system has a BEB
at the origin when a parameter $\tilde{\mu}$ is zero.
Structurally stable dynamics of the system near the bifurcation are captured by
its PWL approximation.
This is obtained by replacing each smooth component of the system
with the linear terms of its Taylor expansion centred at $(\tilde{x},\tilde{y};\tilde{\mu}) = (0,0;0)$.
This has the form
\begin{equation}
\begin{split}
\dot{\tilde{x}} &= \begin{cases}
U_L(\ee) \left[ \begin{array}{c} \tilde{x} \\ \hline \tilde{y} \end{array} \right] + q(\ee) \tilde{\mu}, & \tilde{x}_1 \le 0, \\
U_R(\ee) \left[ \begin{array}{c} \tilde{x} \\ \hline \tilde{y} \end{array} \right] + q(\ee) \tilde{\mu}, & \tilde{x}_1 \ge 0,
\end{cases} \\
\dot{\tilde{y}} &= \begin{cases}
\ee V_L(\ee) \left[ \begin{array}{c} \tilde{x} \\ \hline \tilde{y} \end{array} \right] + \ee r(\ee) \tilde{\mu}, & \tilde{x}_1 \le 0, \\
\ee V_R(\ee) \left[ \begin{array}{c} \tilde{x} \\ \hline \tilde{y} \end{array} \right] + \ee r(\ee) \tilde{\mu}, & \tilde{x}_1 \ge 0,
\end{cases}
\end{split}
\label{eq:sfpwl}
\end{equation}
where $U_L$ and $U_R$ are $k \times n$ matrices,
$V_L$ and $V_R$ are $(n-k) \times n$ matrices,
$q \in \mathbb{R}^k$,
and $r \in \mathbb{R}^{n-k}$.
To simplify the notation we write
\begin{align}
\tilde{z} &= \left[ \begin{array}{c} \tilde{x} \\ \hline \tilde{y} \end{array} \right], &
P_L(\ee) &= \left[ \begin{array}{c} U_L(\ee) \\ \hline \ee V_L(\ee) \end{array} \right], &
P_R(\ee) &= \left[ \begin{array}{c} U_R(\ee) \\ \hline \ee V_R(\ee) \end{array} \right], &
c(\ee) &= \left[ \begin{array}{c} q(\ee) \\ \hline \ee r(\ee) \end{array} \right], &
\nonumber
\end{align}
with which \eqref{eq:sfpwl} becomes
\begin{equation}
\dot{\tilde{z}} = \begin{cases}
P_L(\ee) \tilde{z} + c(\ee) \tilde{\mu}, & \tilde{x}_1 \le 0, \\
P_R(\ee) \tilde{z} + c(\ee) \tilde{\mu}, & \tilde{x}_1 \ge 0.
\end{cases}
\label{eq:sfpwl2}
\end{equation}
By continuity, at $\tilde{x} = 0$ the matrices $P_L(\ee)$ and $P_R(\ee)$ only differ in their first columns.
%Also let $\Sigma$ denote the switching manifold, $\tilde{x}_1 = 0$.

%-------------------------------------------------------------------------------
\subsection{Companion matrices}
\label{sub:companion}

Here we clarify notation regarding basis vectors and companion matrices.

Given $m \ge 1$,
let $e_1,\ldots,e_m$ denote the standard basis vectors of $\mathbb{R}^m$.
%Although below we work in different dimensions,
%it should be clear from the context what dimension a given basis vector has.
Below we work in different dimensions but the dimensions of the basis vectors
should be clear from the context.
The identity matrix is $I_m = \big[ e_1 \cdots e_m \big]$,
and we write $J_m = \big[ 0~e_1 \cdots e_{m-1} \big]$ (with $J_1 = 0$).

A matrix of the form $J_m - p e_1^{\sf T}$, where $p \in \mathbb{R}^m$, is called a {\em companion matrix}.
Companion matrices are convenient in that the components of $p$
provide the coefficients of the characteristic polynomial:
\begin{equation}
\det \left( \lambda I_m - \left( J_m - p e_1^{\sf T} \right) \right)
= \lambda^m + p_1 \lambda^{m-1} + \cdots + p_{m-1} \lambda + p_m \,.
\label{eq:charPoly}
\end{equation}

%-------------------------------------------------------------------------------
\subsection{The observer canonical form}
\label{sub:ocf}

The observer canonical form is a PWL system involving companion matrices.
In $n$ dimensions it may be written as
\begin{equation}
\dot{z} = \begin{cases}
\left( J_n - p^L e_1^{\sf T} \right) z + e_n \mu, & z_1 \le 0, \\
\left( J_n - p^R e_1^{\sf T} \right) z + e_n \mu, & z_1 \ge 0,
\end{cases}
\label{eq:ocf}
\end{equation}
where $p^L, p^R \in \mathbb{R}^n$.
The following result gives conditions under which
the general PWL system \eqref{eq:sfpwl2} can be transformed to \eqref{eq:ocf},
and provides explicit formulas for the transformation.
Here the slow-fast form of \eqref{eq:sfpwl2} is not important
and the $\ee$-dependency can be ignored.

%...............................................................................
\begin{proposition}
Consider a system of the form \eqref{eq:sfpwl2}.
Let $p^L$ be the vector whose components are the coefficients of the
characteristic polynomial of $P_L$.
Let
\begin{align}
\Psi &= \left[ \begin{array}{cccc}
1 \\
p^L_1 & 1 \\
\vdots & \ddots & \ddots \\
p^L_{n-1} & \cdots & p^L_1 & 1
\end{array} \right], &
\Phi &= \left[ \begin{array}{c}
e_1^{\sf T} \\
e_1^{\sf T} P_L \\
\vdots \\
e_1^{\sf T} P_L^{n-1}
\end{array} \right],
\label{eq:PsiPhi}
\end{align}
and
\begin{align}
Q &= \Psi \Phi, &
d &= J_n^{\sf T} Q c, &
s &= e_n^{\sf T} Q c.
\label{eq:Qds}
\end{align}
If $\Phi$ is non-singular and $s \ne 0$, then the change of variables
\begin{align}
z &= Q \tilde{z} + d \tilde{\mu}, &
\mu &= s \tilde{\mu},
\label{eq:transocf}
\end{align}
transforms \eqref{eq:sfpwl2} into \eqref{eq:ocf}\removableFootnote{
I have implemented this in {\sc transToOCF2.m}.
}.
\label{pr:ocf}
\end{proposition}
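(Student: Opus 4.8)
The plan is to verify directly that the proposed change of variables conjugates the left piece of \eqref{eq:sfpwl2} to the left piece of \eqref{eq:ocf}, and then to check that the right piece is handled automatically by continuity. First I would record the key algebraic identity satisfied by $\Phi$. Since $\Phi$ has rows $e_1^{\sf T} P_L^{\,i}$ for $i = 0, \ldots, n-1$, left-multiplying $\Phi$ by $P_L$ shifts these rows up by one, so $\Phi P_L$ has rows $e_1^{\sf T} P_L, \ldots, e_1^{\sf T} P_L^{\,n}$; the first $n-1$ of these are exactly rows $2,\ldots,n$ of $\Phi$, i.e.\ $J_n^{\sf T}$ applied on the left picks them out, while the last row $e_1^{\sf T} P_L^{\,n}$ is expressed via the Cayley--Hamilton theorem as a combination of the lower rows with coefficients $-p^L_n, \ldots, -p^L_1$. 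In matrix form this reads $\Phi P_L = (J_n - p^L e_1^{\sf T})^{\sf T}\,$-like relation; more precisely I would show $\Phi P_L = M \Phi$ for a companion-type matrix $M$, and then check that conjugating $M$ by $\Psi$ yields exactly $J_n - p^L e_1^{\sf T}$. The matrix $\Psi$ is lower-triangular with ones on the diagonal, built from the same coefficients $p^L_i$; the point of introducing it is to absorb the Cayley--Hamilton coefficients so that $Q P_L Q^{-1} = J_n - p^L e_1^{\sf T}$, where $Q = \Psi\Phi$. This is the heart of the standard derivation of the observer canonical form and I expect it to be the main computational obstacle — keeping track of the triangular bookkeeping in $\Psi$ and the Cayley--Hamilton substitution simultaneously.

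Second, with $z = Q\tilde z + d\tilde\mu$ and $\mu = s\tilde\mu$, I would differentiate: on $\tilde x_1 \le 0$ we have $\dot z = Q\dot{\tilde z} = Q P_L \tilde z + Q c\,\tilde\mu = (J_n - p^L e_1^{\sf T}) Q \tilde z + Q c\,\tilde\mu$. Writing $Q\tilde z = z - d\tilde\mu$ gives $\dot z = (J_n - p^L e_1^{\sf T}) z + \big(Q c - (J_n - p^L e_1^{\sf T}) d\big)\tilde\mu$, so it remains to show the bracketed constant vector equals $e_n s$. Using $d = J_n^{\sf T} Q c$ and the fact that $J_n (J_n - p^L e_1^{\sf T}) J_n^{\sf T}$ collapses appropriately — indeed $J_n J_n^{\sf T} = I_n - e_n e_n^{\sf T}$ and $e_1^{\sf T} J_n^{\sf T} = 0$ — one finds $(J_n - p^L e_1^{\sf T}) d = J_n J_n^{\sf T} Q c = Q c - e_n (e_n^{\sf T} Q c) = Q c - e_n s$, whence the forcing term is precisely $e_n s\,\tilde\mu = e_n \mu$. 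The condition $s \ne 0$ is needed exactly so that $\mu = s\tilde\mu$ is a genuine (invertible) reparametrisation of the bifurcation parameter, and $\Phi$ nonsingular (equivalently $Q$ nonsingular, since $\Psi$ is always invertible) is what makes $z \mapsto \tilde z$ a valid coordinate change.

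Third, I would address the switching manifold and the right-hand piece. The switching surface $\tilde x_1 = 0$ is $e_1^{\sf T}\tilde z = 0$; since the first row of $Q = \Psi\Phi$ is $e_1^{\sf T}$ (the top row of $\Psi$ is $e_1^{\sf T}$ and the top row of $\Phi$ is $e_1^{\sf T}$), we get $z_1 = e_1^{\sf T} z = e_1^{\sf T} Q\tilde z + e_1^{\sf T} d\,\tilde\mu$; one checks $e_1^{\sf T} d = e_1^{\sf T} J_n^{\sf T} Q c = 0$, so $z_1 = \tilde x_1$ and the switching condition is preserved. For the right piece, recall from the remark after \eqref{eq:sfpwl2} that $P_L$ and $P_R$ differ only in their first columns, and the system \eqref{eq:sfpwl2} is continuous across $\tilde x_1 = 0$; hence \eqref{eq:ocf} with the left data is already continuous across $z_1 = 0$, and there is a \emph{unique} vector $p^R$ making $(J_n - p^R e_1^{\sf T})z + e_n\mu$ agree with the transformed right system on $z_1 = 0$ — namely the one whose components are the coefficients of the characteristic polynomial of $P_R$, by the same computation applied verbatim to the right piece (note $Q$, built from $P_L$, still conjugates $P_R$ to a companion matrix because $P_L$ and $P_R$ agree off the first column and $e_1^{\sf T} Q = e_1^{\sf T}$ kills exactly that column in the relevant products). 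Assembling these pieces gives \eqref{eq:ocf}.
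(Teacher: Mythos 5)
Your proposal is correct and follows essentially the same route as the paper's proof in Appendix A: a direct verification that $Q P_L Q^{-1} = J_n - p^L e_1^{\sf T}$ via row shifts and the Cayley--Hamilton theorem, the forcing-term identity using $d = J_n^{\sf T} Q c$, $J_n J_n^{\sf T} = I_n - e_n e_n^{\sf T}$ and $e_1^{\sf T} J_n^{\sf T} = 0$, preservation of the switching manifold from $e_1^{\sf T} Q = e_1^{\sf T}$ and $e_1^{\sf T} d = 0$, and the identification of $p^R$ from the fact that $P_L$ and $P_R$ differ only in their first columns together with similarity-invariance of the characteristic polynomial. Your factoring of the main conjugation through $\Phi P_L = M\Phi$ and $\Psi M \Psi^{-1}$ is only a cosmetic reorganisation of the paper's computation of $J_n Q - Q P_L = p^L e_1^{\sf T}$.
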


Proposition \ref{pr:ocf} is based on standard techniques in control theory \cite{DiBu08}.
We provide a full proof of Proposition \ref{pr:ocf} in Appendix \ref{app:ocf}
because some formulas developed in the proof
are used below to prove the result of the next section\removableFootnote{
I have been unable to find a nice proof of Proposition \ref{pr:ocf},
although, as shown in Appendix \ref{app:ocf}, it is not actually that difficult.
Perhaps more difficult is showing that the given condition is necessary.
Proofs of using alternate form for $Q$ are given in \cite{CaFr02,Si10}.
}\removableFootnote{
Notice that $e_1^{\sf T} Q = e_1^{\sf T}$ and $e_1^{\sf T} d = 0$.
Therefore $z_1 = \tilde{z}_1$.
If $\Phi$ is singular, then \eqref{eq:sfpwl2} cannot be transformed to \eqref{eq:ocf}.
If $s \ne 0$, then $\tilde{\mu}$ does not unfold the BEB in a generic fashion
in that \eqref{eq:sfpwl2} has an equilibrium on the switching manifold for all $\tilde{\mu} \in \mathbb{R}$.
}.

%-------------------------------------------------------------------------------
\subsection{The slow-fast observer canonical form}
\label{sub:sfocf}

We define the slow-fast observer canonical form (SFOCF) as 
\begin{equation}
\dot{z} = \begin{cases}
C_L(\ee) z + \ee e_n \mu, & z_1 \le 0, \\
C_R(\ee) z + \ee e_n \mu, & z_1 \ge 0,
\end{cases}
\label{eq:sfocf}
\end{equation}
where
\begin{equation}
C_X(\ee) = \left[ \begin{array}{cccc|cccc}
-a^X_1(\ee) & 1 && \\
\vdots && \ddots & \\
&&& 1 \\
-a^X_k(\ee) &&&& 1 \\ \hline
-\ee b^X_1(\ee) &&&&& \ee \\
\vdots &&&&&& \ddots \\
&&&&&&& \ee \\
-\ee b^X_{n-k}(\ee) &&&
\end{array} \right],
\label{eq:CX}
\end{equation}
and $a^X \in \mathbb{R}^k$, $b^X \in \mathbb{R}^{n-k}$, for $X = L,R$.
It has the both slow-fast form \eqref{eq:sfpwl2}
and zeros in the same entries as the observer canonical form \eqref{eq:ocf}.
In order to transform \eqref{eq:sfpwl2} into \eqref{eq:sfocf}, we
combine the change of variables of Proposition \ref{pr:ocf} with the observation that 
$E C_X E^{-1}$ is a companion matrix, where
\begin{equation}
E(\ee) = \left[ \begin{array}{ccc|cccc}
1 && \\
& \ddots & \\
&& 1 \\ \hline
&&& 1 \\
&&&& \ee \\
&&&&& \ddots \\
&&&&&& \ee^{n-k-1}
\end{array} \right].
\label{eq:E}
\end{equation}

%...............................................................................
\begin{proposition}
Let $Q$, $d$, and $s$ be defined as in \eqref{eq:Qds}.
If $\Phi$ is non-singular and $s \ne 0$, then the change of variables
\begin{align}
z &= E^{-1} \left( Q \tilde{z} + d \tilde{\mu} \right), &
\mu &= \frac{s}{\ee^{n-k}} \tilde{\mu},
\label{eq:transsfocf}
\end{align}
transforms \eqref{eq:sfpwl2} into \eqref{eq:sfocf}\removableFootnote{
I have implemented this in {\sc transToSFOCF2.m}.
}.
\label{pr:sfocf}
\end{proposition}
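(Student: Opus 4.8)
The plan is to compose the transformation of Proposition \ref{pr:ocf} with the diagonal rescaling $z \mapsto E(\ee)^{-1} z$, using the indicated fact that $E(\ee) C_X(\ee) E(\ee)^{-1}$ is a companion matrix. Throughout we take $\ee > 0$, so that $E(\ee)$ is invertible and division by $\ee^{n-k}$ is legitimate. Since $\Phi$ is non-singular and $s \ne 0$, Proposition \ref{pr:ocf} applies verbatim — its proof makes no use of the slow-fast structure — so the substitution $w = Q\tilde z + d\tilde\mu$, $\nu = s\tilde\mu$ turns \eqref{eq:sfpwl2} into the observer canonical form $\dot w = (J_n - p^X e_1^{\sf T}) w + e_n \nu$, with $X = L$ when $w_1 \le 0$ and $X = R$ when $w_1 \ge 0$, where $p^L, p^R$ are the characteristic-polynomial coefficient vectors of $P_L, P_R$. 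I would also record the identities $e_1^{\sf T} Q = e_1^{\sf T}$ and $e_1^{\sf T} d = 0$ from the proof of Proposition \ref{pr:ocf}, which give $w_1 = \tilde z_1 = \tilde x_1$, so that the switching set is unchanged.

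The main step is to identify $E C_X E^{-1}$ explicitly. In the natural $2 \times 2$ block partition, \eqref{eq:CX} reads
\[
C_X(\ee) = \left[ \begin{array}{c|c} J_k - a^X e_1^{\sf T} & e_k e_1^{\sf T} \\ \hline -\ee\, b^X e_1^{\sf T} & \ee J_{n-k} \end{array} \right],
\]
while $E(\ee) = \mathrm{diag}(I_k, D)$ with $D = \mathrm{diag}(1, \ee, \dots, \ee^{n-k-1})$. Conjugating block by block: the upper-left companion block is unchanged; the coupling block $e_k e_1^{\sf T}$ is unchanged because the $(1,1)$ entry of $D^{-1}$ equals $1$; the lower-right block becomes $D (\ee J_{n-k}) D^{-1} = J_{n-k}$, since $D J_{n-k} D^{-1} = \ee^{-1} J_{n-k}$; and the lower-left column $-\ee\, b^X$ has its $j$-th entry scaled by $\ee^{j-1}$, becoming $-\ee^{j} b^X_j$. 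Comparing with the block structure of $J_n$ then shows that $E C_X(\ee) E^{-1} = J_n - p e_1^{\sf T}$ with $p = (a^X_1, \dots, a^X_k, \ee b^X_1, \ee^2 b^X_2, \dots, \ee^{n-k} b^X_{n-k})^{\sf T}$. Because $\ee > 0$, setting $a^X_i(\ee) = p^X_i$ for $1 \le i \le k$ and $b^X_j(\ee) = p^X_{k+j} / \ee^{j}$ for $1 \le j \le n-k$ makes $E C_X(\ee) E^{-1} = J_n - p^X e_1^{\sf T}$, i.e. $C_X(\ee) = E^{-1}(J_n - p^X e_1^{\sf T}) E$. Since $p^L, p^R$ differ only where $P_L, P_R$ do, $C_L$ and $C_R$ then differ only in their first columns, consistent with continuity at $z_1 = 0$.

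It remains to conclude. Put $z = E^{-1} w$; substituting $w = E z$ into the observer canonical form and left-multiplying by $E^{-1}$ gives $\dot z = C_X(\ee) z + E^{-1} e_n \nu$. The last diagonal entry of $E$ is $\ee^{n-k-1}$, so $E^{-1} e_n \nu = \ee^{-(n-k-1)} s \tilde\mu\, e_n = \ee\, e_n \mu$ provided $\mu = s\tilde\mu / \ee^{n-k}$, which is exactly the forcing term in \eqref{eq:sfocf}; and $z_1 = w_1 = \tilde x_1$ preserves the switching condition. Composing the two substitutions gives $z = E^{-1}(Q\tilde z + d\tilde\mu)$ and $\mu = s\tilde\mu / \ee^{n-k}$, as claimed.

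I expect the only delicate point to be the conjugation bookkeeping in the middle step: tracking which power of $\ee$ from $D$ or $D^{-1}$ acts on each row and column of each block of $C_X(\ee)$, and in particular checking that $D^{-1}$ disturbs neither the first column nor the fast-to-slow coupling entry. Once that is settled, the statement follows immediately from Proposition \ref{pr:ocf} and the fact that $E$ is diagonal with a unit entry in positions $1$ and $k+1$.
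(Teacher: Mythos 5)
Your proof is correct and follows essentially the same route as the paper: apply Proposition \ref{pr:ocf}, conjugate by $E$ using the fact that $E C_X E^{-1}$ is a companion matrix, and reduce the forcing term via $E^{-1} e_n = \ee^{-(n-k-1)} e_n$. The only difference is that you carry out the block-by-block computation of $E C_X E^{-1}$ explicitly (thereby defining $a^X(\ee)$, $b^X(\ee)$ from $p^X$ for $\ee>0$), which the paper leaves as a brief remark.
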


Proposition \ref{pr:sfocf} is proved in Appendix \ref{app:sfocf} by direct calculations\removableFootnote{
If the system has the particular slow-fast form described earlier,
then the transformation from $\tilde{z}$ to $z$ is order $1$.
This is intuitive but tricky to prove.
I've decided not to include a proof because it not worth the effort.
We have to directly evaluate $\hat{Q} = E^{-1} \Psi \Phi$
because there are many similarity transforms from $P_L$ to $C_L$.
Also we can't simply set $\ee = 0$ because then we may have $\det(\Phi) = 0$.

My proof in {\sc BEBDimReduction\_old.tex} using old coordinates and notation seems like the best approach.
Here's how I expect the proof to go:

One first applies a near-similarity transform to put $P_L$ in block diagonal form.
Then powers of $P_L$ also have a block diagonal form.
By evaluating $E^{-1} \Psi \Phi$ we find that, in block form, the top half of $\hat{Q}$ is $\cO(1)$,
and, by applying the Cayley-Hamilton to the top-left block of $P_L$, the bottom-left block of $\hat{Q}$ is $\cO(\ee)$.
This gives a stronger result than intended, but is needed to deal with the bottom-right block of $\hat{Q}$:
via the formula $C_L \hat{Q} = \hat{Q} P_L$, with each matrix in block form,
since the $\ee$'s in $C_L$ on the left hand side cancel with the $\ee$'s in $\hat{Q}$ on the right hand side,
we deduce that the bottom-right block of $\hat{Q}$ is $\cO(1)$.
}.

%===============================================================================
\section{Dynamics in the slow-fast limit}
\label{sec:dimred}
\setcounter{equation}{0}

The SFOCF \eqref{eq:sfocf} separates into its fast and slow components as
\begin{equation}
\begin{split}
\dot{x} &= \begin{cases}
\left( J_k - a^L(\ee) e_1^{\sf T} \right) x + e_k y_1 \,, & x_1 \le 0, \\
\left( J_k - a^R(\ee) e_1^{\sf T} \right) x + e_k y_1 \,, & x_1 \ge 0,
\end{cases} \\
\dot{y} &= \begin{cases}
\ee \left( -b^L(\ee) x_1 + J_{n-k} y + e_{n-k} \mu \right), & x_1 \le 0, \\ 
\ee \left( -b^R(\ee) x_1 + J_{n-k} y + e_{n-k} \mu \right), & x_1 \ge 0.
\end{cases}
\end{split}
\label{eq:sfocf2}
\end{equation}
By taking $\ee \to 0$, we obtain the {\em layer equations}
\begin{align}
\dot{x} &= \begin{cases}
A_L x + e_k y_1 \,, & x_1 \le 0, \\
A_R x + e_k y_1 \,, & x_1 \ge 0,
\end{cases}
\label{eq:limitingFastx} \\
\dot{y} &= 0,
\label{eq:limitingFasty}
\end{align}
where
\begin{align}
A_L &= J_k - a^L(0) e_1^{\sf T}, &
A_R &= J_k - a^R(0) e_1^{\sf T}.
\label{eq:ALAR}
\end{align}
Alternatively, on the slow time-scale $\tau = \ee t$, the limit $\ee \to 0$ produces the {\em reduced system}
\begin{align}
0 &= \begin{cases}
A_L x + e_k y_1 \,, & x_1 \le 0, \\
A_R x + e_k y_1 \,, & x_1 \ge 0,
\end{cases}
\label{eq:limitingSlowx0} \\
\frac{d y}{d \tau} &= \begin{cases}
-b^L(0) x_1 + J_{n-k} y + e_{n-k} \mu, & x_1 \le 0, \\
-b^R(0) x_1 + J_{n-k} y + e_{n-k} \mu, & x_1 \ge 0.
\end{cases}
\label{eq:limitingSlowy0}
\end{align}
In this section we first derive the critical manifold $\cM_0$, \S\ref{sub:M0}.
We then discuss the stability of $\cM_0$, \S\ref{sub:expstab},
and describe the reduced system restricted to $\cM_0$, \S\ref{sub:reducedSystem}.

%-------------------------------------------------------------------------------
\subsection{The critical manifold}
\label{sub:M0}

%%%%%%%%%%%%%%%%%%%%%%%%%%%%%%%%%%%%%%%%%%%%%%%%%%%%%%%%%%%%%%%%%%%%%%%%%%%%%%%%
\begin{figure}[b!]
\begin{center}
\setlength{\unitlength}{1cm}
%\begin{picture}(13.3,4.8)
\begin{picture}(12.5,4.8)
\put(0,0){\includegraphics[height=4.8cm]{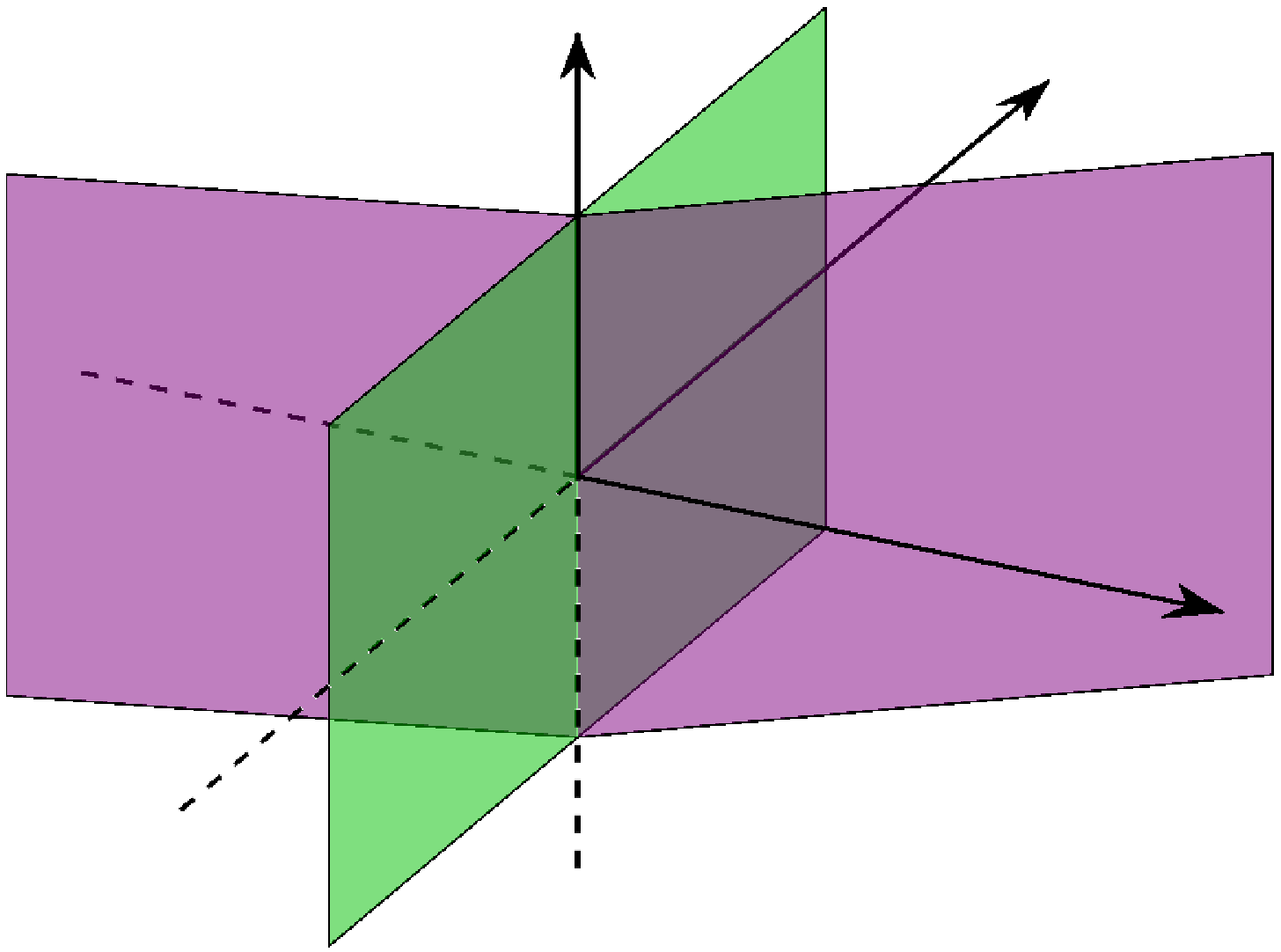}}
\put(6.9,0){\includegraphics[height=4.8cm]{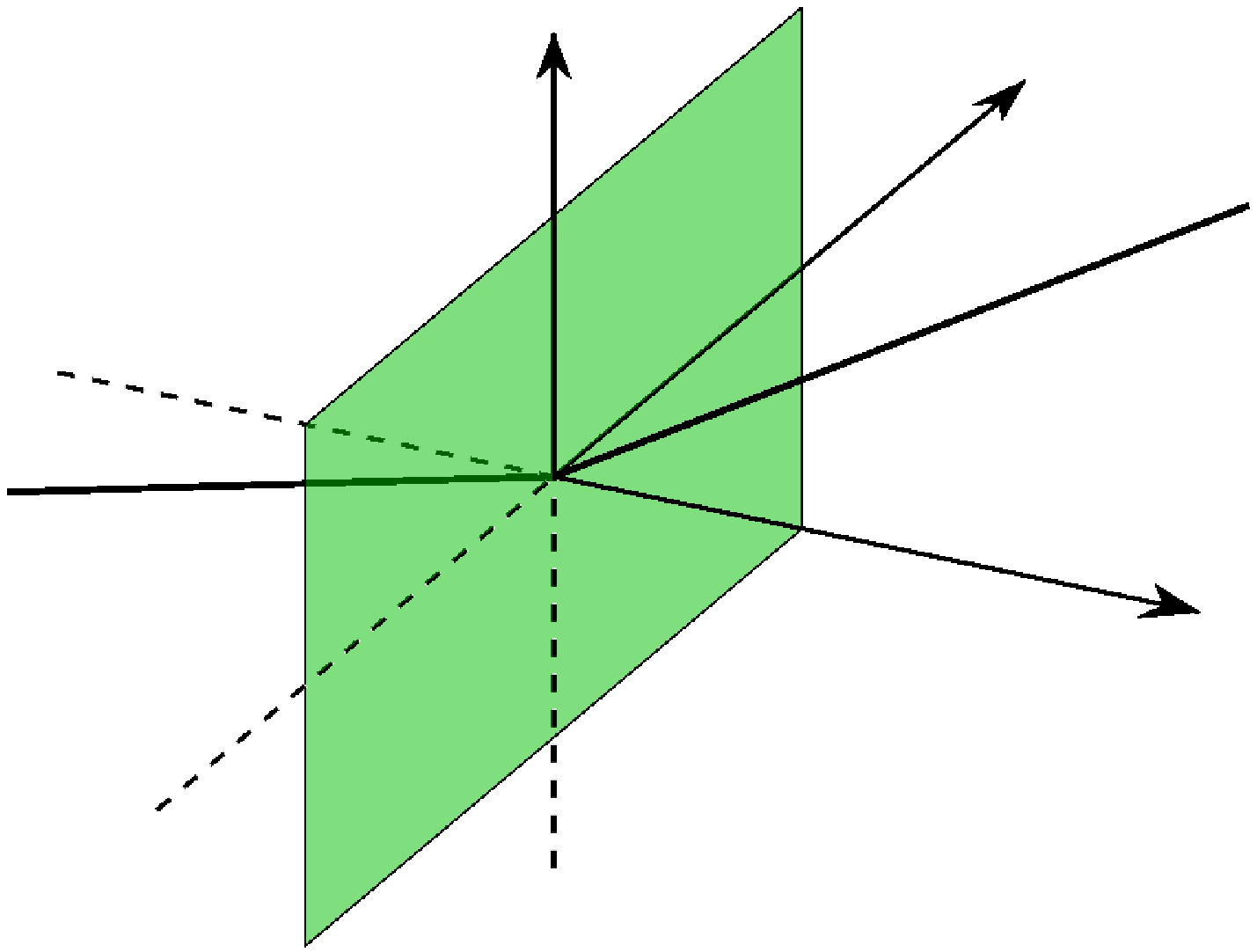}}
\put(.2,4.7){\sf \bfseries A}
\put(7.1,4.7){\sf \bfseries B}
\put(5.97,1.84){\scriptsize $x_1$}
\put(5.22,4.13){\scriptsize $y_1$}
\put(3,4.4){\scriptsize $y_2$}
\put(.7,3.93){\scriptsize $\cM_0$}
\put(12.87,1.84){\scriptsize $x_1$}
\put(12.12,4.13){\scriptsize $x_2$}
\put(9.9,4.4){\scriptsize $y_1$}
\put(12.6,3.23){\scriptsize $\cM_0$}
\end{picture}
\caption{
Sketches of the $(n-k)$-dimensional critical manifold $\cM_0$.
In panel A, $(n,k) = (3,1)$ (one fast variable, two slow variables).
In panel B, $(n,k) = (3,2)$ (two fast variables, one slow variable).
In each panel the green surface is the switching manifold $x_1 = 0$.
\label{fig:schemCriticalManifold}
} 
\end{center}
\end{figure}
%%%%%%%%%%%%%%%%%%%%%%%%%%%%%%%%%%%%%%%%%%%%%%%%%%%%%%%%%%%%%%%%%%%%%%%%%%%%%%%%

The critical manifold $\cM_0$ is defined by the algebraic constraint \eqref{eq:limitingSlowx0},
see Fig.~\ref{fig:schemCriticalManifold}.
For the fast system \eqref{eq:limitingFastx}, it is a surface of equilibria.
Here we compute these equilibria and express $\cM_0$ as a function of $y$.

Suppose $A_L$ and $A_R$ are non-singular.
Then the components of \eqref{eq:limitingFastx}
have unique equilibria, $x^L(y_1) = -A_L^{-1} e_k y_1$ and $x^R(y_1) = -A_R^{-1} e_k y_1$,
for any $y_1 \in \mathbb{R}$.
Since $A_L$ and $A_R$ are companion matrices \eqref{eq:ALAR},
their inverses have a simple form, and
\begin{align}
x^L(y_1) &= \frac{1}{a^L_k(0)} \left[ \begin{array}{c}
1 \\ a^L_1(0) \\ \vdots \\ a^L_{k-1}(0) \end{array} \right] y_1 \,, &
x^R(y_1) &= \frac{1}{a^R_k(0)} \left[ \begin{array}{c}
1 \\ a^R_1(0) \\ \vdots \\ a^R_{k-1}(0) \end{array} \right] y_1 \,,
\label{eq:xLxR}
\end{align}
where $a^L_k(0) = (-1)^k \det(A_L)$ and $a^R_k(0) = (-1)^k \det(A_R)$ are non-zero by assumption.

Below we assume all eigenvalues of $A_L$ and $A_R$ have negative real-part,
as this is needed to ensure that the critical manifold is stable.
Here we show that this condition also ensures that
\eqref{eq:limitingFastx} has a unique equilibrium for all $y_1 \in \mathbb{R}$.

%...............................................................................
\begin{lemma}
Suppose all eigenvalues of $A_L$ and $A_R$ have negative real part.
Then $a^L_k(0), a^R_k(0) > 0$, and \eqref{eq:limitingFastx} has the unique equilibrium
\begin{equation}
H(y_1) = \begin{cases}
x^L(y_1), & y_1 \le 0, \\
x^R(y_1), & y_1 \ge 0,
\end{cases}
\label{eq:H}
\end{equation}
for all $y_1 \in \mathbb{R}$.
\label{le:eq}
\end{lemma}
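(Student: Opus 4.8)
The plan is to verify the two assertions of the lemma separately: first the sign of $a^L_k(0)$ and $a^R_k(0)$, and then the fact that the piecewise-defined map $H$ is the unique equilibrium of the layer equation \eqref{eq:limitingFastx} for every $y_1 \in \mathbb{R}$. For the sign, I would invoke the relation $a^X_k(0) = (-1)^k \det(A_X)$ already noted in the excerpt. Since $A_X$ is a real $k \times k$ matrix all of whose eigenvalues have negative real part, its determinant equals the product of the eigenvalues, which, grouping complex-conjugate pairs, has sign $(-1)^k$; hence $(-1)^k \det(A_X) > 0$, giving $a^X_k(0) > 0$. In particular $A_L$ and $A_R$ are non-singular, so the formulas \eqref{eq:xLxR} for $x^L$ and $x^R$ apply.

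Next I would address uniqueness. For $y_1$ fixed, an equilibrium of \eqref{eq:limitingFastx} is a point $x$ with either $x_1 \le 0$ and $A_L x = -e_k y_1$, or $x_1 \ge 0$ and $A_R x = -e_k y_1$. Because $A_L, A_R$ are invertible, on each half-space there is at most one candidate, namely $x^L(y_1)$ (valid as an equilibrium only if its first component is $\le 0$) and $x^R(y_1)$ (valid only if its first component is $\ge 0$). From \eqref{eq:xLxR}, the first component of $x^L(y_1)$ is $\frac{y_1}{a^L_k(0)}$ and that of $x^R(y_1)$ is $\frac{y_1}{a^R_k(0)}$; since both denominators are positive, these first components have the same sign as $y_1$. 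Therefore: if $y_1 < 0$, only $x^L(y_1)$ lies in its designated region and $x^R(y_1)$ does not, giving the unique equilibrium $x^L(y_1)$; if $y_1 > 0$, symmetrically the unique equilibrium is $x^R(y_1)$; and if $y_1 = 0$, both formulas give $x = 0$, which lies on the switching manifold $x_1 = 0$ and is the unique equilibrium. This is exactly the piecewise description \eqref{eq:H}, and it is consistent (single-valued) at $y_1 = 0$.

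I would also note, for completeness, the continuity check on the switching manifold: at $x_1 = 0$ the right-hand sides of the two branches of \eqref{eq:limitingFastx} must agree, which follows from the continuity hypothesis on \eqref{eq:pws} inherited through the SFOCF; this guarantees that a point with $x_1 = 0$ satisfying one branch's equilibrium equation automatically satisfies the other's, so the case $y_1 = 0$ presents no ambiguity.

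The only mild subtlety — and the place where a careless argument could go wrong — is the edge case $y_1 = 0$ together with the possibility, a priori, that $x^L(y_1)$ has first component exactly $0$ for some $y_1 \neq 0$, or that $x^L$ and $x^R$ could both qualify in the same region; the positivity of $a^L_k(0)$ and $a^R_k(0)$ established in the first step is precisely what rules this out and makes the region-membership test reduce cleanly to the sign of $y_1$. So the "hard part" is really just making sure the sign bookkeeping in step one is airtight; once that is in hand, uniqueness is immediate. No fixed-point or degree-theoretic machinery is needed here since everything reduces to inverting two linear maps and checking a single scalar inequality.
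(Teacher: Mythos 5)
Your proof is correct and takes essentially the same route as the paper: establish $a^L_k(0), a^R_k(0) > 0$, then note that the first components of $x^L(y_1)$ and $x^R(y_1)$ share the sign of $y_1$, so exactly one candidate is admissible on each side of the switching manifold and the two coincide at $y_1 = 0$. The only cosmetic difference is in the positivity step, where the paper argues via the characteristic polynomial (positive for large $\lambda$ and with no nonnegative real root, hence positive at $\lambda = 0$), while you compute the sign of $\det(A_X)$ from the product of eigenvalues --- both arguments are equally valid.
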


%...............................................................................
\begin{proof}
Since $A_L$ has the companion matrix form \eqref{eq:ALAR}, its characteristic polynomial is
\begin{equation}
\det \left( \lambda I - A_L \right)
= \lambda^k + a^L_1(0) \lambda^{k-1} + \cdots + a^L_{k-1}(0) \lambda + a^L_k(0).
\nonumber
\end{equation}
By assumption there are no positive values of $\lambda$ for which $\det \left( \lambda I - A_L \right) = 0$.
Since $\det \left( \lambda I - A_L \right)$ is positive for large values of $\lambda$,
it is also positive with $\lambda = 0$.
That is, $a^L_k(0) > 0$.
Also $a^R_k(0) > 0$ for the same reasons.

Since $A_L$ and $A_R$ are non-singular,
$x^L(y_1)$ and $x^R(y_1)$ are the only potential equilibria of \eqref{eq:limitingFastx}.
The point $x^L(y_1)$ is an equilibrium of \eqref{eq:limitingFastx}
if its first component, $\frac{y_1}{a^L_k(0)}$, is less than or equal to zero.
Since $a^L_k(0) > 0$, this is the case if and only if $y_1 \le 0$.
Similarly $x^R(y_1)$ is an equilibrium of \eqref{eq:limitingFastx} if and only if $y_1 \ge 0$.
Also $x^L(y_1) = x^R(y_1)$ if $y_1 = 0$.
Therefore, $H(y_1)$ is the unique equilibrium of \eqref{eq:limitingFastx} for any $y_1 \in \mathbb{R}$.
\end{proof}

From Lemma \ref{le:eq} we can write the critical manifold as
\begin{equation}
\cM_0 = \left\{ \left[ \begin{array}{c} x \\ \hline y \end{array} \right] \,\middle|\,
x = H(y_1),\, y \in \mathbb{R}^{n-k} \right\}.
\label{eq:M0}
\end{equation}
In view of the above eigenvalue assumption,
every point on $\cM_0$ that does not belong to the switching manifold, $x_1 = 0$,
is a hyperbolic equilibrium of \eqref{eq:limitingFastx}.
Therefore, away from the switching manifold $\cM_0$ is normally hyperbolic.

%-------------------------------------------------------------------------------
\subsection{Stability of the critical manifold}
\label{sub:expstab}

Here we discuss the stability of the equilibrium $H(y_1)$ of \eqref{eq:limitingFastx}
subject to the assumption that all eigenvalues of $A_L$ and $A_R$ have negative real part.
%We let $\phi_t(x;y_1)$ denote the solution to \eqref{eq:limitingFastx} (i.e.~the flow)
%with initial point $x$ at time $0$, for a given $y_1 \in \mathbb{R}$.

If $y_1 \ne 0$, then $H(y_1)$ does not belong to the switching manifold of \eqref{eq:limitingFastx}
and is asymptotically stable due to the eigenvalue assumption.
If $y_1 = 0$, then $H(y_1)$ belongs to the switching manifold and
its stability is not easily characterised in terms of the eigenvalues of $A_L$ and $A_R$.
Indeed $H(0)$ can be unstable \cite{CaFr06},
although this requires \eqref{eq:limitingFastx} to be at least three-dimensional ($k \ge 3$).

In order to show that the dynamics of the full system \eqref{eq:sfocf2} stays near $\cM_0$ when $\ee > 0$,
we require $H(y_1)$ to satisfy a particularly strong form of stability.
Recall, an equilibrium is said to be {\em globally asymptotically stable}
if it is asymptotically stable and its basin of attraction is the whole space.
Here we denote the flow of \eqref{eq:limitingFastx} by $\phi_t(x;y_1)$.

%...............................................................................
\begin{definition}
Suppose all eigenvalues of $A_L$ and $A_R$ have negative real part.
We say that $\cM_0$ is {\em globally stable} if
$H(y_1)$ is a globally asymptotically stable equilibrium of \eqref{eq:limitingFastx} for all $y_1 \in \mathbb{R}$.
We say that $\cM_0$ is {\em globally exponentially stable} if
there exist $\alpha \ge 1$ and $\beta > 0$ such that
\begin{equation}
\left\| \phi_t(x;y_1) - H(y_1) \right\| \le
\alpha {\rm e}^{-\beta t} \left\| x - H(y_1) \right\|, \quad
{\rm for~all~} y_1 \in \mathbb{R},
{\rm ~all~} x \in \mathbb{R}^k,
{\rm ~and~all~} t \ge 0.
\label{eq:expStab}
\end{equation}
\label{df:expStab}
\end{definition}

The next result (proved in Appendix \ref{app:expStab}) shows that these two forms of stability are equivalent.
That global exponential stability implies global stability is trivial;
the converse is an artifact of the PWL nature of \eqref{eq:limitingFastx}.

%...............................................................................
\begin{lemma}
The critical manifold $\cM_0$ is globally stable if and only if it is globally exponentially stable.
\label{le:expStab}
\end{lemma}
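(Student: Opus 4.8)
The plan is to prove the non-trivial direction: global stability implies global exponential stability. The key structural fact to exploit is that the layer equation \eqref{eq:limitingFastx} is positively homogeneous of degree one in the shifted variable $w = x - H(y_1)$. Indeed, because $A_L$ and $A_R$ are linear, $e_k y_1$ is constant on each half-space, and the switching manifold $x_1 = 0$ is a linear hyperplane through the equilibrium $H(y_1)$ (since the first component of $H(y_1)$ is $\frac{y_1}{a_k^X(0)}$, whose sign matches the half-space, so on the boundary $x_1=0$ we have $w_1 = 0$ too), the dynamics for $w$ is a genuinely homogeneous PWL system $\dot w = A_L w$ for $w_1 \le 0$ and $\dot w = A_R w$ for $w_1 \ge 0$, with \emph{no} dependence on $y_1$ at all. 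So it suffices to show: a globally asymptotically stable, continuous, two-piece homogeneous PWL system in $\mathbb{R}^k$ is globally exponentially stable, with constants uniform in $y_1$ (which is automatic once the $y_1$-dependence has been scaled out).

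First I would make the reduction above explicit: set $\psi_t(w) := \phi_t(w + H(y_1); y_1) - H(y_1)$ and check $\psi_t$ solves the homogeneous system independent of $y_1$, so it is enough to bound $\|\psi_t(w)\| \le \alpha e^{-\beta t}\|w\|$. Next I would use positive homogeneity: $\psi_t(\lambda w) = \lambda \psi_t(w)$ for all $\lambda > 0$, so it suffices to control orbits starting on the unit sphere $S^{k-1}$. By global asymptotic stability, for each $w \in S^{k-1}$ there is a time $T(w)$ with $\|\psi_{T(w)}(w)\| \le \tfrac12$; by continuous dependence of $\psi_t$ on the initial condition (the right-hand side is Lipschitz, being PWL and continuous, so solutions are unique and depend continuously on data) and compactness of $S^{k-1}$, one can extract a finite uniform time $T$ and a uniform bound $\|\psi_T(w)\| \le \tfrac12$ for all $w \in S^{k-1}$, together with a uniform bound $\sup_{0\le t\le T,\ w\in S^{k-1}} \|\psi_t(w)\| =: M < \infty$. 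Homogeneity then upgrades these to $\|\psi_T(w)\| \le \tfrac12\|w\|$ and $\|\psi_t(w)\| \le M\|w\|$ for $0 \le t \le T$ and \emph{all} $w \in \mathbb{R}^k$. Finally, iterating the contraction via the flow (semigroup) property, $\|\psi_{mT}(w)\| \le 2^{-m}\|w\|$, and interpolating over $t \in [mT,(m+1)T)$ using the $M$-bound, yields \eqref{eq:expStab} with, e.g., $\beta = \tfrac{\ln 2}{T}$ and $\alpha = 2M$ (and $\alpha \ge 1$ after enlarging if necessary).

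The main obstacle is the compactness/uniformity step: extracting a single finite time $T$ that works for the entire unit sphere. The subtlety is that $T(w)$ could a priori be unbounded as $w$ ranges over $S^{k-1}$, but this is ruled out by continuity of the flow in the initial condition — around each $w_0$ there is a neighbourhood on which $\|\psi_{T(w_0)}(\cdot)\| < \tfrac12$ by continuity, and $S^{k-1}$ is covered by finitely many such neighbourhoods, so $T = \max$ of the finitely many $T(w_0)$ works. One should also double-check that $H(y_1)$ genuinely is the equilibrium in the shifted coordinates on both half-spaces and that the switching hyperplane passes through it, so that the shifted system is truly homogeneous with no affine term; this is exactly Lemma \ref{le:eq} together with the formula \eqref{eq:xLxR}. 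No other step requires anything beyond the PWL structure, linear ODE theory, and elementary compactness.
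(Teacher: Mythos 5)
Your reduction to a homogeneous system breaks down at the very first step. You claim that after shifting $w = x - H(y_1)$ the layer equation \eqref{eq:limitingFastx} becomes $\dot w = A_L w$ for $w_1 \le 0$, $\dot w = A_R w$ for $w_1 \ge 0$, independent of $y_1$, on the grounds that the switching hyperplane $x_1 = 0$ passes through $H(y_1)$. It does not: by \eqref{eq:xLxR} and Lemma \ref{le:eq} the first component of $H(y_1)$ is $y_1/a^L_k(0)$ or $y_1/a^R_k(0)$, which is nonzero whenever $y_1 \ne 0$ (this is exactly why the paper notes that for $y_1 \ne 0$ the equilibrium lies off the switching manifold and is hyperbolic). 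In the shifted variable the switching plane sits at $w_1 = -\left(H(y_1)\right)_1 \ne 0$, and the piece on the far side of it acquires an affine term: for $y_1 > 0$, say, one gets $\dot w = A_R w$ only on $w_1 \ge -y_1/a^R_k(0)$, while on the other side $\dot w = A_L w + (A_L - A_R)H(y_1)$, with $(A_L - A_R)H(y_1) = -(a^L(0)-a^R(0))\,y_1/a^R_k(0) \ne 0$ in general. So for fixed $y_1 \ne 0$ the shifted system is neither homogeneous nor $y_1$-independent, the scaling $\psi_t(\lambda w) = \lambda\,\psi_t(w)$ fails, and the unit-sphere compactness argument (and with it the claimed uniformity of $\alpha,\beta$ in $y_1$) collapses. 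The homogeneity that is actually available is the joint one, $\phi_t(\xi x;\xi y_1) = \xi\,\phi_t(x;y_1)$ for $\xi > 0$, which only lets you reduce to the three representative cases $y_1 \in \{-1,0,1\}$.

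Your compactness-plus-homogeneity argument is sound for the case $y_1 = 0$, where the switching plane does pass through $H(0) = 0$ and the system is genuinely homogeneous; that part coincides in substance with the paper's treatment of $y_1 = 0$ (uniform convergence on a compact set, then scaling). The missing work is the case $y_1 = \pm 1$, which is an affine two-piece system whose equilibrium is off the switching manifold: there you need (i) local exponential decay near $H(\pm 1)$, which is immediate since the dynamics is linear in a neighbourhood not meeting $x_1 = 0$; (ii) a uniform entry time into that neighbourhood from a large ball, by global asymptotic stability and compactness; and (iii) control of initial conditions arbitrarily far away, where the overshoot must be bounded proportionally to $\|x - H(\pm 1)\|$ — the paper gets this by comparing the $y_1 = \pm 1$ flow to the already-exponentially-stable $y_1 = 0$ homogeneous flow via a Gr\"onwall estimate. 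Without step (iii), global asymptotic stability alone does not give the bound \eqref{eq:expStab} with a single pair $(\alpha,\beta)$ valid for all initial conditions, so your proof as written has a genuine gap precisely where the affine, non-homogeneous structure enters.
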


If all eigenvalues of $A_L$ and $A_R$ have negative real part,
then $H(y_1)$ is asymptotically stable for all $y_1 \ne 0$.
If $H(y_1)$ is also asymptotically stable for $y_1 = 0$,
then, with $y_1 \ne 0$, orbits far from $H(y_1)$ travel inwards because the large-scale dynamics
are approximated by the system with $y_1 = 0$.
In the simplest scenario, $H(y_1)$ is globally asymptotically stable.
Indeed numerical explorations have failed to find other invariant sets,
and so here we conjecture that this must be the case.
If true, this result provides us with a weaker set of conditions that ensures $\cM_0$ is globally stable,
but we have been unable to prove it because the intermediate-scale dynamics is highly nonlinear.

%...............................................................................
\begin{conjecture}
Suppose all eigenvalues of $A_L$ and $A_R$ have negative real part.
If $H(y_1)$ is an asymptotically stable equilibrium of \eqref{eq:limitingFastx} for $y_1 = 0$,
then $\cM_0$ is globally stable.
\label{cj:globalStability}
\end{conjecture}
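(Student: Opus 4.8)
\noindent\textbf{Proof strategy for Conjecture~\ref{cj:globalStability}.}
The plan is to first strip out the parameter $y_1$ by scaling, then use a converse Lyapunov theorem to trap all trajectories in a compact region, and finally attack the convergence inside that region. Since the fast field in \eqref{eq:limitingFastx} is positively homogeneous of degree one and $y_1$ enters only through the constant vector $e_k y_1$, the substitution $x = |y_1| \xi$ conjugates the flow for a given $y_1 \neq 0$ to the flow for $y_1 = \operatorname{sgn}(y_1)$ (the half-space condition $x_1\le 0$ is preserved) and carries $H(y_1)$ to $|y_1| H(\operatorname{sgn} y_1)$; as global asymptotic stability is scale invariant, it suffices to treat $y_1 \in \{-1,0,1\}$. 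For $y_1 = 0$ the system is $\dot x = F(x)$ with $F(\lambda x) = \lambda F(x)$ for $\lambda>0$, so its flow commutes with positive scalings; the hypothesis (asymptotic stability of the origin at $y_1=0$) then forces \emph{global} asymptotic stability, because every initial condition can be scaled into the local basin. So the content is the two cases $y_1 = \pm 1$, which are handled identically; fix $y_1 = 1$, so that by Lemma~\ref{le:eq} there is a unique equilibrium $H(1)$ lying strictly in the half-space $x_1>0$.

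The field $F$ is continuous, positively homogeneous of degree one, and (as just shown) has a globally asymptotically stable origin, so by a converse Lyapunov theorem for homogeneous vector fields (Rosier) there is a proper, positive definite $V \colon \mathbb{R}^k \to \mathbb{R}$, of class $C^1$ on $\mathbb{R}^k\setminus\{0\}$ and positively homogeneous of some degree $p\ge 2$, with $\nabla V(x)\cdot F(x) \le -c\|x\|^{p}$ for some $c>0$. Along a trajectory of the affine system $\dot x = F(x) + e_k$,
\[ \frac{d}{dt} V(x) = \nabla V(x)\cdot F(x) + \partial_{x_k} V(x) \;\le\; -c\|x\|^{p} + C_1\|x\|^{p-1}, \]
using that $\nabla V$ is homogeneous of degree $p-1$. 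Hence $\frac{d}{dt}V(x) < 0$ whenever $\|x\| > C_1/c$, and a standard sublevel-set argument (choosing $\ell$ so that $\{V\le\ell\}$ contains the ball $\|x\|\le C_1/c$ in its interior) yields a compact forward-invariant set $\Omega$, with $H(1)\in\Omega$, that every trajectory enters in finite time. In particular every trajectory of the affine system is bounded.

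It remains to show that every trajectory in $\Omega$ converges to $H(1)$, and this is the step I expect to be the main obstacle --- it is precisely where the authors report being stuck. The most promising route I see is to follow the difference $u = x - \tilde x$ of two trajectories: since $F(x) = A_L x + \big(a^L(0)-a^R(0)\big)(x_1)^{+}$, with $(x_1)^{+}:=\max(x_1,0)$ (this uses $A_R - A_L = (a^L(0)-a^R(0)) e_1^{\sf T}$ from \eqref{eq:ALAR}), and $t\mapsto t^{+}$ is monotone and $1$-Lipschitz, one gets $\dot u = M(t) u$ with $M(t) = (1-\theta(t))A_L + \theta(t) A_R \in \operatorname{conv}\{A_L,A_R\}$ for a measurable $\theta(t)\in[0,1]$ ($\theta=0$ when $x_1,\tilde x_1\le 0$, $\theta=1$ when $x_1,\tilde x_1\ge 0$). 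If this switched linear system were globally uniformly asymptotically stable, taking $\tilde x\equiv H(1)$ would give $\|x(t)-H(1)\|\le \alpha\,{\rm e}^{-\beta t}\|x(0)-H(1)\|$ and hence even global exponential stability. The difficulty is that global uniform asymptotic stability of the inclusion $\dot u \in \operatorname{conv}\{A_L,A_R\}\,u$ is \emph{strictly stronger} than asymptotic stability of the conewise field $F$ assumed here (an adversarial switching between $A_L$ and $A_R$ need not be realised by any conewise trajectory), so one must exploit the constraint that $\theta(t)$ comes from two bounded affine trajectories --- e.g.\ that the straddling set $\{t: x_1(t)\tilde x_1(t) < 0\}$ cannot persist in a destabilising pattern. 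An alternative is to seek a genuine Lyapunov function for the affine system near $H(1)$, such as $(x-H(1))^{\sf T} P\,(x-H(1))$ with $P$ a common quadratic Lyapunov matrix for $\{A_L,A_R\}$ (if one exists): this decreases on $\{x_1\ge 0\}$ and, on $\{x_1\le 0\}$, decreases outside a bounded neighbourhood of the point of that half-space closest to $H(1)$, so one would then have to control passages through that neighbourhood. For $k\le 2$ neither difficulty arises --- $k=1$ is scalar, and for $k=2$ the divergence $\operatorname{div}F$ equals $-a^L_1(0)<0$ on $x_1<0$ and $-a^R_1(0)<0$ on $x_1>0$, so a piecewise-smooth Bendixson--Dulac argument excludes periodic orbits and Poincar\'e--Bendixson finishes it --- so the open difficulty is genuinely the case $k\ge 3$, consistent with $H(0)$ being able to be unstable only when $k\ge 3$.
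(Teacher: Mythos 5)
You have not proved the statement, and to be fair you say so yourself; but it is worth recording precisely where your proposal stands relative to the paper. The statement is presented in the paper as a \emph{conjecture}: the author explicitly reports being unable to prove it ``because the intermediate-scale dynamics is highly nonlinear'', and no proof is given anywhere in the text. Your preliminary reductions are correct and essentially coincide with tools the paper already uses elsewhere: the scaling $x = |y_1|\xi$ exploiting positive homogeneity of the conewise field (the same homogeneity identity $\phi_t(x;y_1)=y_1\phi_t(x/y_1;1)$ appears in the proof of Lemma \ref{le:expStab}), the observation that local asymptotic stability of $H(0)$ at $y_1=0$ upgrades to global stability by scaling (also in that proof), the Rosier-type homogeneous Lyapunov function giving a compact absorbing set and hence boundedness of all trajectories of the affine system at $y_1=\pm1$, the decomposition $F(x)=A_Lx+\bigl(a^L(0)-a^R(0)\bigr)(x_1)^+$, and the disposal of $k\le 2$ via Bendixson--Dulac and Poincar\'e--Bendixson. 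All of that is sound, and it isolates the conjecture's true content: for $k\ge 3$, every bounded trajectory of $\dot x = F(x)+e_k$ must converge to the unique equilibrium $H(1)$.

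That remaining step is a genuine gap, not a routine verification, and neither of the two routes you sketch can close it as stated. Global uniform asymptotic stability of the differential inclusion $\dot u \in \operatorname{conv}\{A_L,A_R\}\,u$ under arbitrary measurable switching is strictly stronger than the hypothesis of the conjecture (stability of the \emph{conewise} system, where the switching signal is slaved to the sign of $x_1$), and it is known to fail for pairs of Hurwitz matrices in dimension $\ge 3$ even when the conewise matching is stable; likewise a common quadratic Lyapunov matrix $P$ for $\{A_L,A_R\}$ need not exist under the conjecture's hypotheses, so any argument resting on either would prove a weaker theorem with stronger assumptions, not the conjecture. You would need to exploit the constraint that the ``switching'' $\theta(t)$ is generated by the trajectories themselves (or find a nonquadratic, piecewise-defined Lyapunov function adapted to the cone structure), and it is exactly this step --- excluding periodic orbits, canard-like bounded invariant sets, or other $\omega$-limit sets away from $H(\pm 1)$ when $k\ge 3$ --- that the paper identifies as the obstruction and leaves open. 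So your proposal is best read as a correct reduction of the conjecture to its hard core, together with an honest account of why the obvious attacks fail, rather than as a proof.
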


%-------------------------------------------------------------------------------
\subsection{The reduced system restricted to the critical manifold}
\label{sub:reducedSystem}

By \eqref{eq:xLxR}, on $\cM_0$ the first component of $x$ is given by
\begin{equation}
x_1 = \begin{cases}
\frac{1}{a^L_k(0)} y_1 \,, & y_1 \le 0, \\
\frac{1}{a^R_k(0)} y_1 \,, & y_1 \ge 0.
\end{cases}
%\label{eq:x1}
\nonumber
\end{equation}
By substituting this into \eqref{eq:limitingSlowy0} we can rewrite the reduced system as
\begin{align}
x(\tau) &= H(y_1(\tau)),
\label{eq:limitingSlowx} \\
\frac{d y}{d \tau} &= \begin{cases} 
B_L y + e_{n-k} \mu, & y_1 \le 0, \\
B_R y + e_{n-k} \mu, & y_1 \ge 0,
\end{cases}
\label{eq:limitingSlowy}
\end{align}
where
\begin{align}
B_L &= J_{n-k} - \frac{b^L(0)}{a^L_k(0)} e_1^{\sf T}, &
B_R &= J_{n-k} - \frac{b^R(0)}{a^R_k(0)} e_1^{\sf T}.
\label{eq:BLBR}
\end{align}

Here we show that, for both $X = L$ and $X = R$,
the eigenvalues of $C_X(\ee)$, in the original form \eqref{eq:sfocf},
are those of $A_X$ and $\ee B_X$, to leading order.

%...............................................................................
\begin{lemma}
Suppose all eigenvalues of $A_L$ and $A_R$ have negative real part.
Then, for all $\lambda \in \mathbb{C}$,
\begin{equation}
\det \left( \lambda I - C_X(\ee) \right)
= \det \left( \lambda I - A_X + \cO(\ee) \right)
\det \left( \lambda I - \ee B_X + \cO \left( \ee^2 \right) \right),
\label{eq:eigsABC}
\end{equation}
for $X = L,R$.
\label{le:eigsABC}
\end{lemma}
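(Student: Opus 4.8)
The plan is to compute the characteristic polynomial of $C_X(\ee)$ in closed form and then read the two factors off it. Write $C_X(\ee)$ in the $2\times2$ block form visible in \eqref{eq:CX}: the top-left $k\times k$ block is $A_X(\ee):=J_k-a^X(\ee)e_1^{\sf T}$ (so $A_X(\ee)=A_X+\cO(\ee)$), the top-right block is $e_ke_1^{\sf T}$, the bottom-left block is $-\ee b^X(\ee)e_1^{\sf T}$, and the bottom-right block is $\ee J_{n-k}$. For $\lambda$ outside the spectrum of $A_X(\ee)$ the Schur complement of the top-left block gives
\[
\det\big(\lambda I-C_X(\ee)\big)=\det\big(\lambda I_k-A_X(\ee)\big)\;\det\Big(\lambda I_{n-k}-\ee J_{n-k}+\ee\big[e_1^{\sf T}(\lambda I_k-A_X(\ee))^{-1}e_k\big]\,b^X(\ee)e_1^{\sf T}\Big).
\]
The simplification that makes everything work is that, since $A_X(\ee)$ is a companion matrix, $e_1^{\sf T}(\lambda I_k-A_X(\ee))^{-1}e_k=1/\det(\lambda I_k-A_X(\ee))$; this follows from Cramer's rule, because deleting row $k$ and column $1$ from $\lambda I_k-A_X(\ee)$ leaves a triangular matrix of determinant $(-1)^{k-1}$. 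Applying the rank-one determinant identity $\det(M+uv^{\sf T})=\det(M)(1+v^{\sf T}M^{-1}u)$ to the second factor — a rank-one update of the upper-triangular matrix $\lambda I_{n-k}-\ee J_{n-k}$, whose determinant is $\lambda^{n-k}$ and whose inverse is the finite sum $\sum_{i\ge0}\ee^i\lambda^{-i-1}J_{n-k}^i$ — and then clearing the denominator, I obtain the polynomial identity
\begin{equation}
\det\big(\lambda I-C_X(\ee)\big)=\lambda^{n-k}\det\big(\lambda I_k-A_X(\ee)\big)+\sum_{j=1}^{n-k}\ee^{\,j}b^X_j(\ee)\,\lambda^{\,n-k-j},\tag{$\star$}
\end{equation}
which, both sides being polynomials in $\lambda$, now holds for all $\lambda\in\mathbb{C}$.

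Next I would read off the factorisation. Setting $\ee=0$ in $(\star)$ gives $\det(\lambda I-C_X(0))=\lambda^{n-k}\det(\lambda I_k-A_X)$, a product of two coprime polynomials since $\det(-A_X)=a^X_k(0)>0$ by Lemma \ref{le:eq}. The coefficients of $\det(\lambda I-C_X(\ee))$ depend (at least Lipschitz-)continuously on $\ee$ through $a^X(\ee)$ and $b^X(\ee)$, so the standard persistence of coprime factorisations of monic polynomials — an implicit-function-theorem argument in which Bézout's identity inverts the linearised multiplication map — yields, for small $\ee$, a unique factorisation $\det(\lambda I-C_X(\ee))=\pi_1(\lambda;\ee)\pi_2(\lambda;\ee)$ into monic polynomials of degrees $k$ and $n-k$ with $\pi_1(\lambda;0)=\det(\lambda I_k-A_X)$, $\pi_2(\lambda;0)=\lambda^{n-k}$, and coefficients $\cO(\ee)$-close to these. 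Being monic of degree $k$ with $\pi_1(\lambda;\ee)=\det(\lambda I_k-A_X)+\cO(\ee)$, the polynomial $\pi_1$ is the characteristic polynomial of its companion matrix, which equals $A_X+\cO(\ee)$; hence $\pi_1(\lambda;\ee)=\det(\lambda I-A_X+\cO(\ee))$ — this is the first factor in \eqref{eq:eigsABC}.

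To pin down $\pi_2$ I would rescale $\lambda=\ee\nu$ in $(\star)$. Using $\det(\ee\nu I_k-A_X(\ee))=\det(-A_X)+\cO(\ee)=a^X_k(0)+\cO(\ee)$ and factoring out $\ee^{n-k}$,
\[
\det\big(\ee\nu I-C_X(\ee)\big)=\ee^{n-k}\Big[a^X_k(0)\nu^{n-k}+\sum_{j=1}^{n-k}b^X_j(0)\,\nu^{\,n-k-j}+\cO(\ee)\Big]=\ee^{n-k}a^X_k(0)\big[\det(\nu I_{n-k}-B_X)+\cO(\ee)\big],
\]
the last step being \eqref{eq:charPoly} for the companion matrix $B_X=J_{n-k}-\big(b^X(0)/a^X_k(0)\big)e_1^{\sf T}$. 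Dividing by $\pi_1(\ee\nu;\ee)=a^X_k(0)+\cO(\ee)$, which is nonzero for small $\ee$ because $a^X_k(0)>0$, gives $\ee^{-(n-k)}\pi_2(\ee\nu;\ee)=\det(\nu I_{n-k}-B_X)+\cO(\ee)$ uniformly for $\nu$ in compact sets. Thus the monic polynomial $\nu\mapsto\ee^{-(n-k)}\pi_2(\ee\nu;\ee)$ is the characteristic polynomial of a companion matrix equal to $B_X+\cO(\ee)$; undoing the scaling, $\pi_2(\lambda;\ee)=\det\big(\lambda I_{n-k}-\ee B_X+\cO(\ee^2)\big)$, the second factor in \eqref{eq:eigsABC}. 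Multiplying the two factors finishes the proof.

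The step I expect to be the main obstacle is the bookkeeping in this factorisation, and in particular recognising that one cannot simply take the ``obvious'' candidates $\det(\lambda I_k-A_X(\ee))$ and $\det(\lambda I_{n-k}-\ee B_X)$ as the factors: subtracting their product from $(\star)$ leaves an error of exact order $\ee$, not $\ee^2$, so an $\cO(\ee)$ correction must be shifted into the fast factor $\pi_1$ — which is precisely what the Hensel-type factorisation accomplishes. It is also worth noting that the $\cO(\ee)$ and $\cO(\ee^2)$ perturbation matrices produced this way are independent of $\lambda$ (they are the companion matrices of $\pi_1$ and of the rescaled $\pi_2$, minus $A_X$ and $\ee B_X$ respectively), so that \eqref{eq:eigsABC} holds verbatim for every $\lambda\in\mathbb{C}$.
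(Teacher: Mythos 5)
Your proof is correct and follows essentially the same route as the paper: you compute the characteristic polynomial of $C_X(\ee)$ explicitly (your Schur-complement identity $(\star)$ is exactly the polynomial the paper reads off the companion matrix $E C_X E^{-1}$) and then factor it into a fast and a slow factor recognised as the characteristic polynomials of $A_X + \cO(\ee)$ and $\ee B_X + \cO\left(\ee^2\right)$. The only real difference is that you justify the factorisation step (coprimality of the $\ee=0$ factors via $a^X_k(0)>0$, persistence of the factorisation, and the rescaling $\lambda=\ee\nu$ to pin down the slow factor), which the paper simply asserts; this is added rigour rather than a different approach.
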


%...............................................................................
\begin{proof}
By evaluating the companion matrix $E C_X E^{-1}$,
where $E$ is given by \eqref{eq:E},
we find that the characteristic polynomial of $C_X$ is
\begin{align}
\det(\lambda I - C_X(\ee)) &= \lambda^n + a^X_1(\ee) \lambda^{n-1} + \cdots
+ a^X_k(\ee) \lambda^{n-k} \nonumber \\
&\quad+ \ee b^X_1(\ee) \lambda^{n-k-1} + \cdots
+ \ee^{n-k-1} b^X_{n-k-1}(\ee) \lambda + \ee^{n-k} b^X_{n-k}(\ee).
\nonumber
\end{align}
This can be factored as
\begin{align}
\det(\lambda I - C(\ee)) &=
\left( \lambda^k + \left[ a^X_1(0) + \cO(\ee) \right] \lambda^{k-1} + \cdots
+ \left[ a^X_k(0) + \cO(\ee) \right] \right) \nonumber \\
&\quad \times \bigg( \lambda^{n-k}
+ \ee \left[ \frac{b^X_1(0)}{a^X_k(0)} + \cO(\ee) \right] \lambda^{n-k-1}
+ \cdots \nonumber \\
&\quad+ \ee^{n-k-1} \left[ \frac{b^X_{n-k-1}(0)}{a^X_k(0)} + \cO(\ee) \right] \lambda
+ \ee^{n-k} \left[ \frac{b^X_{n-k}(0)}{a^X_k(0)} + \cO(\ee) \right] \bigg),
\nonumber
\end{align}
which is the right hand side of \eqref{eq:eigsABC} because
the coefficients of the characteristic polynomials of $A_X$ and $B_X$
are the components of the vectors $a^X(0)$ and $\frac{b^X(0)}{a^X_k(0)}$.
\end{proof}

%===============================================================================
\section{The dynamics of the full system}
\label{sec:mainthm}
\setcounter{equation}{0}

In this section we consider the SFOCF \eqref{eq:sfocf}, also written as \eqref{eq:sfocf2}, with $\ee > 0$.
Here we suppose that the right hand side of \eqref{eq:sfocf} is a $C^1$ function of $\ee$ in
some interval $[0,\ee_1]$, and denote the flow of \eqref{eq:sfocf} by $\varphi_t(z;\ee)$.

%-------------------------------------------------------------------------------
\subsection{A local forward invariant region}
\label{sub:mainthm}

The motivation for the following theorem (Theorem \ref{th:mainBound})
is that we would like to know that
attractors on $\cM_0$ do not change catastrophically as the value of $\ee$ is increased from $0$.
For this reason, given $\delta > 0$ and a compact set $\Omega \subset \mathbb{R}^{n-k}$, we consider regions
\begin{equation}
\Omega_\delta = \left\{ \left[ \begin{array}{c} x \\ \hline y \end{array} \right] \,\middle|\,
\| x - H(y_1) \| \le \delta,\, y \in \Omega \right\}.
\label{eq:Omegadelta}
\end{equation}
In particular $\Omega_0 \subset \mathbb{R}^n$ is a compact subset of $\cM_0$.
%and indeed our proof relies on compactness arguments.
We assume, not only that $\Omega$ is a trapping region for the reduced system,
but that the vector field points {\em strictly} inwards throughout the boundary of $\Omega$.
This ensures that forward orbits do not diverge for small values of $\ee$.
Specifically we use the following definition.

%...............................................................................
\begin{definition}
Let $\Omega \subset \mathbb{R}^{n-k}$ be compact with smooth boundary $\partial \Omega$\removableFootnote{
I don't have to say that $\partial \Omega$ is orientable because $\Omega$ is compact.
The ``inside of $\partial \Omega$'' is simply the interior of $\Omega$.
}.
The set $\Omega$ is said to be a {\em strong trapping region} for \eqref{eq:limitingSlowy}
if it is a trapping region and there are no points on $\partial \Omega$
at which $\frac{d y}{d \tau}$ is tangent to $\partial \Omega$.
\end{definition}

%...............................................................................
\begin{theorem}
Suppose $\cM_0$ is globally stable
and let $\Omega \subset \mathbb{R}^{n-k}$ be a strong trapping region for \eqref{eq:limitingSlowy}.
Then there exist $M > 0$, $N \ge M$, and $\ee_{\rm max} \in (0,\ee_1]$, such that
\begin{equation}
\varphi_t(z;\ee) \in \Omega_{\ee N} \,, \quad
{\rm for~all~} \ee \in (0,\ee_{\rm max}),
{\rm ~all~} z \in \Omega_{\ee M},
{\rm ~and~all~} t \ge 0.
\label{eq:mainBound}
\end{equation}
\label{th:mainBound}
\end{theorem}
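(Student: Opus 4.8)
The plan is to construct a Lyapunov-type function for the fast subsystem that measures the distance to $\cM_0$, and to show that along orbits of the full system this function decreases fast enough to dominate the $\cO(\ee)$ drift caused by the slow motion of $H(y_1)$. By Lemma~\ref{le:expStab}, global stability of $\cM_0$ is equivalent to global exponential stability, so there are $\alpha \ge 1$, $\beta > 0$ with $\|\phi_t(x;y_1) - H(y_1)\| \le \alpha {\rm e}^{-\beta t}\|x - H(y_1)\|$. A standard converse-Lyapunov construction (integrating $\|\phi_t(x;y_1)-H(y_1)\|^2$, or a suitable power, over $t \in [0,T]$ for $T$ large) yields, for each fixed $y_1$, a function $W(x;y_1)$ — piecewise-quadratic in $x-H(y_1)$ because \eqref{eq:limitingFastx} is PWL and homogeneous in $x - H(y_1)$ after the shift — satisfying $c_1 \|x-H(y_1)\|^2 \le W \le c_2\|x-H(y_1)\|^2$ and $\nabla_x W \cdot (\text{fast field}) \le -c_3 W$ for constants $c_1,c_2,c_3 > 0$ independent of $y_1$ (independence follows from the scale-invariance in $y_1$, since $H$ is homogeneous of degree $1$ and $A_L,A_R$ do not depend on $y_1$). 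Set $V(z) = W(x;y_1)$; this is the candidate whose sublevel sets will be the nested regions $\Omega_{\ee M} \subset \Omega_{\ee N}$.

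Next I would differentiate $V$ along the full flow \eqref{eq:sfocf2}. Writing $\dot V = \nabla_x V \cdot \dot x + \nabla_{y_1} V \cdot \dot y_1$, the first term is $\le -c_3 V + \cO(\ee)$: it is $-c_3 V$ from the layer dynamics up to the $\cO(\ee)$ discrepancy between $C_X(\ee)$ and the limiting matrices $A_X$ (and between $\dot x$ here and the layer field, which differ by the $e_k y_1$ term already built in, plus $\cO(\ee)$ corrections to $a^X(\ee)$). The second term is $\cO(\ee)$ because $\dot y_1 = \cO(\ee)$ on the compact set $\Omega$ and $\nabla_{y_1} V$ is bounded there (here I use that $\|x - H(y_1)\|$ stays bounded, e.g. by $\delta$, so $V$ and its $y_1$-derivative are controlled; one may also need to handle the non-smoothness of $H$ at $y_1 = 0$ by noting $V$ is still Lipschitz and using a Dini derivative / Filippov argument, or by observing $W$ is continuous across $y_1=0$ since $x^L(0)=x^R(0)$). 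Altogether $\dot V \le -c_3 V + K\ee$ for a constant $K$ depending only on $\Omega$ and on the $C^1$-in-$\ee$ bounds. Hence on the shell where $V = \Theta(\ee^2)$ — that is, $\|x-H(y_1)\| = \Theta(\ee)$ — we have $\dot V \le -c_3 V + K\ee < 0$ once $V \ge 2K\ee/c_3$, which holds for the region $\|x - H(y_1)\| \le \ee N$ with $N$ chosen so $c_1 (\ee N)^2 \ge 2K\ee/c_3$, i.e. $N \gtrsim 1/\sqrt\ee$ — that is the wrong order. I must instead exploit that $\dot y_1$ is not merely $\cO(\ee)$ but genuinely slow, so that $V$ is pushed outward only at rate $\cO(\ee)$ while it is pulled inward at rate $\cO(1)$ relative to its own size; balancing $\ee \sim c_3 V / (\text{const})$ with $V \sim \|x-H\|^2$ and $\|x-H\| \sim \ee N$ forces reconsidering — the resolution is that the relevant outward push on $\|x - H(y_1)\|$ itself is $\cO(\ee)$, giving $\frac{d}{dt}\|x-H(y_1)\| \le -\tfrac{c_3}{2}\|x-H(y_1)\| + K'\ee$, whose forward-invariant sublevel is exactly $\|x - H(y_1)\| \le 2K'\ee/c_3 =: \ee N$, with $M = N$ working for the inclusion. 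So I would work directly with $\|x-H(y_1)\|$ (or $\sqrt V$) rather than $V$, using the exponential estimate from Lemma~\ref{le:expStab} in comparison-principle form.

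The boundary-transversality hypothesis — that $\Omega$ is a \emph{strong} trapping region — enters to prevent escape in the $y$-directions: on $\partial\Omega$ the reduced field $\frac{dy}{d\tau}$ points strictly inward, a compactness argument gives a uniform inward margin, and the full $y$-dynamics $\dot y$ differs from $\ee \times (\text{reduced field})$ by $\cO(\ee \cdot \ee N) = \cO(\ee^2)$ on $\Omega_{\ee N}$ (the correction involves $x_1 - (\text{its value on }\cM_0)$, which is $\cO(\ee N)$, times $\cO(\ee)$), so for $\ee$ small the inward margin survives and $\varphi_t(z;\ee)$ cannot cross $\partial\Omega \times \{\text{fibre}\}$. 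Combining the two confinements — the fast fibre estimate keeping $\|x - H(y_1)\| \le \ee N$ and the slow estimate keeping $y \in \Omega$ — gives forward invariance of $\Omega_{\ee N}$ and hence \eqref{eq:mainBound}, with $\ee_{\max}$ the smallest $\ee$-threshold arising in the finitely many estimates. The main obstacle I anticipate is the non-smoothness of $H$ at $y_1 = 0$: the Lyapunov function built from the two smooth components must be shown to be globally well-behaved (continuous, Lipschitz, with a usable differential inequality in the Dini sense) across the switching manifold, and the differential inequality for $\|x - H(y_1)\|$ must be justified there despite $H$ being only piecewise-$C^1$ — this is where I would spend the most care, likely invoking that $H$ is Lipschitz and that the one-sided derivatives match in the relevant direction, or passing to a Filippov/differential-inclusion formulation.
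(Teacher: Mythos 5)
Your overall architecture --- exponential contraction toward $H(y_1)$ in the fast fibres versus an $\cO(\ee)$ drift, combined with a compactness/robustness argument on $\partial\Omega$ to confine the slow variable --- is the right shape, and your treatment of the trapping region is essentially the same as the paper's. But the central quantitative step is not established. You ultimately rest the fibre confinement on the pointwise differential inequality $\frac{d}{dt}\left\| x - H(y_1) \right\| \le -\tfrac{c_3}{2}\left\| x - H(y_1) \right\| + K'\ee$, claimed to follow from Lemma \ref{le:expStab} ``in comparison-principle form''. It does not: \eqref{eq:expStab} only gives decay with a prefactor $\alpha \ge 1$, and for the Euclidean norm the deviation can grow transiently (the matrices $A_L$, $A_R$ are generally non-normal, and the trajectory may cross the switching plane $x_1=0$), so no such pointwise inequality holds in the given norm. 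Repairing it requires an adapted norm or Lyapunov function $W$ that is homogeneous of degree one in the deviation, decays pointwise at a uniform rate, and behaves uniformly in $y_1$ \emph{including} at $y_1=0$, where $H(y_1)$ sits on the switching manifold and no single smooth quadratic Lyapunov function need exist (the converse-Lyapunov object is at best Lipschitz, forcing Dini/Clarke derivative arguments). Your first paragraph attempts exactly this construction, but you yourself identify that the quadratic-level inequality $\dot V \le -c_3 V + K\ee$ gives the wrong order ($N \sim \ee^{-1/2}$), and the passage to $\sqrt V$ needs the nontrivial estimate $\partial_{y_1} V = \cO(\sqrt V)$ uniformly across $y_1 = 0$ and globally in the fibre --- precisely the delicate point you defer. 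So the decisive inequality is asserted rather than proved, and as stated for $\left\| x - H(y_1)\right\|$ it is false in general.

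The paper avoids this machinery entirely: instead of a pointwise Lyapunov estimate it compares the full flow with the frozen ($\ee=0$) layer flow over one fixed time window $T = \tfrac{1}{\beta}\ln(2\alpha)$, bounding the discrepancy by $K\ee t$ via a Gr\"onwall argument (Lemma \ref{le:linearGrowth}); over that window the $\ee=0$ flow contracts the fibre deviation by the factor $\alpha{\rm e}^{-\beta T} = \tfrac12$, so with $M = 2KT$ and $N = 2\alpha M$ the set $\Omega_{\ee M}$ returns into $\Omega_{\ee M}$ at time $T$ while never leaving $\Omega_{\ee N}$, and one iterates the time-$T$ map. This finite-time renewal argument absorbs the transient growth factor $\alpha$ into the choice of $T$ and $N$ (note $N > M$ there, whereas your route would force $M = N$), and it only needs Lemma \ref{le:expStab} in exactly the integrated form \eqref{eq:expStab}. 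If you want to salvage your route, you must actually construct the uniform, degree-one-homogeneous, piecewise Lyapunov function for \eqref{eq:limitingFastx} valid across $y_1=0$; otherwise the finite-time comparison is the shorter path.
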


Our proof of Theorem \ref{th:mainBound}, given below,
uses the following result which is proved in Appendix \ref{app:linearGrowth}.

%...............................................................................
\begin{lemma}
For any $T > 0$ there exists $K \in \mathbb{R}$ such that
\begin{equation}
\left\| \varphi_t(z;\ee) - \varphi_t(z;0) \right\| \le K \ee t, \quad
{\rm for~all~} z \in \Omega_1,
{\rm ~all~} \ee \in [0,\ee_1],
{\rm ~and~all~} t \in [0,T].
\label{eq:linearGrowth}
\end{equation}
\label{le:linearGrowth}
\end{lemma}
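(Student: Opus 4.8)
The plan is to prove \eqref{eq:linearGrowth} by the textbook Gr\"onwall argument for dependence of solutions on a parameter, taking care that the right-hand side of \eqref{eq:sfocf} --- denote it $F(z;\ee)$ --- is only Lipschitz (not $C^1$) in $z$, so that the classical smooth results do not apply verbatim; what makes the argument go through is precisely that $F$ is continuous across the switching manifold $z_1 = 0$.

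First I would record two regularity facts. Since $C_L(\ee)$ and $C_R(\ee)$ are continuous on the compact interval $[0,\ee_1]$, and $F(\cdot;\ee)$ is a continuous, piecewise-linear function with the single switching manifold $z_1 = 0$, the map $z \mapsto F(z;\ee)$ is globally Lipschitz with a constant $\Lambda$ independent of $\ee \in [0,\ee_1]$: on each half-space $F(\cdot;\ee)$ is affine with linear part of norm at most $\max_{X} \sup_{\ee} \|C_X(\ee)\|$, and for two points on opposite sides of $z_1 = 0$ one interpolates through the point where the connecting segment meets the switching manifold, using continuity of $F$ there. Since the right-hand side of \eqref{eq:sfocf} is $C^1$ in $\ee$, there is moreover a constant $L$ with $\|F(z;\ee) - F(z;0)\| \le L\,\ee\,(1 + \|z\|)$ for all $z$ and all $\ee \in [0,\ee_1]$, because the $\ee$-derivatives of $C_L, C_R$ and of the affine term $\ee e_n \mu$ are bounded on $[0,\ee_1]$. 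Lipschitz continuity in $z$ guarantees that solutions are unique and $\varphi_t$ is well-defined, and the differential inequality $\frac{d}{dt}\|\varphi_t(z;\ee)\| \le \Lambda \|\varphi_t(z;\ee)\| + \ee_1 |\mu|$ (Gr\"onwall again) shows that every trajectory starting in $\Omega_1$ remains in a fixed ball $B_R$, with $R$ depending only on $T$, $\ee_1$ and $\Omega$, for all $t \in [0,T]$ and all $\ee \in [0,\ee_1]$, including $\ee = 0$.

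Next, writing $u(t) = \varphi_t(z;\ee)$ and $v(t) = \varphi_t(z;0)$, which agree at $t=0$, I would subtract the integral equations:
\[
u(t) - v(t) = \int_0^t \big[ F(u(s);\ee) - F(v(s);\ee) \big]\,ds + \int_0^t \big[ F(v(s);\ee) - F(v(s);0) \big]\,ds,
\]
bound the first integrand by $\Lambda \|u(s) - v(s)\|$ and the second, using the confinement to $B_R$, by $L\,\ee\,(1+R) =: K_0\,\ee$. Hence $w(t) := \|u(t) - v(t)\|$ satisfies $w(t) \le K_0\,\ee\,t + \Lambda \int_0^t w(s)\,ds$, and since $t \mapsto K_0\,\ee\,t$ is nondecreasing, Gr\"onwall's inequality yields $w(t) \le K_0\,\ee\,t\,{\rm e}^{\Lambda t} \le \big( K_0 {\rm e}^{\Lambda T} \big)\,\ee\,t$ for $t \in [0,T]$. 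Taking $K = K_0 {\rm e}^{\Lambda T}$ (which depends on $T$, as the statement allows) finishes the proof.

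The only genuine obstacle is the first step: verifying that $F(\cdot;\ee)$ is globally Lipschitz in $z$ with an $\ee$-uniform constant, hence that the flow is well-defined and that trajectories issuing from $\Omega_1$ stay in a common bounded set on $[0,T]$. Everything after that is the standard proof of Lipschitz dependence on a parameter; the continuity of $F$ across $z_1 = 0$ is exactly what lets the piecewise-linear vector field be treated, for this purpose, like a globally Lipschitz smooth one.
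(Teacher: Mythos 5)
Your proposal is correct and follows essentially the same route as the paper: write the difference of the two solutions as an integral, split it into a Lipschitz-in-$z$ term and an $\ee$-variation term evaluated along the unperturbed trajectory, bound the latter using boundedness of that trajectory on $[0,T]$, and conclude with Gr\"onwall's inequality, exploiting that continuity across $z_1=0$ gives a global, $\ee$-uniform Lipschitz constant. The only cosmetic difference is that you obtain the needed trajectory bound by an a priori Gr\"onwall estimate (and you also track the $\ee e_n\mu$ term explicitly), whereas the paper simply takes the maximum of $\|\varphi_t(z;0)\|$ over the compact set $\Omega_1\times[0,T]$.
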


%...............................................................................
\begin{proof}[Proof of Theorem \ref{th:mainBound}]
Write $\varphi_t(z;\ee) = \left[ \begin{array}{c} \phi_t(z;\ee) \\ \hline \psi_t(z;\ee) \end{array} \right]$.

Since $\partial \Omega$ is compact
and the right hand side of \eqref{eq:sfocf} varies continuously with respect to $x$ and $\ee$,
there exist $\ee_2, \delta > 0$ (with $\ee \le \ee_1$ and $\delta \le 1$)such that,
for all $\ee \in (0,\ee_2)$, all $y \in \partial \Omega$,
and all $x \in \mathbb{R}^k$ with $\left\| x - H(y_1) \right\| \le \delta$,
the vector $\frac{d y}{d \tau}$ is not tangent to $\partial \Omega$ (and points inwards).

By Lemma \ref{le:expStab},
there exist $\alpha \ge 1$ and $\beta > 0$ such that
\begin{equation}
\left\| \phi_t(z;0) - H(y_1) \right\|
\le \alpha {\rm e}^{-\beta t} \left\| x - H(y_1) \right\|, \quad
{\rm for~all~} x \in \mathbb{R}^k,
{\rm ~all~} y_1 \in \mathbb{R},
{\rm ~and~all~} t \ge 0.
\label{eq:expStabilityPWL}
\end{equation}

Let $T = \frac{1}{\beta} \ln(2 \alpha)$.
Let $K \in \mathbb{R}$ be the constant in Lemma \ref{le:linearGrowth}.
Let $M = 2 K T$ and $N = 2 \alpha M$.
Let $\ee_{\rm max} = \min \left[ \ee_2, \frac{\delta}{N} \right]$.

Choose any $\ee \in (0,\ee_{\rm max})$ and $z \in \Omega_{\ee M}$.
Then
\begin{align}
\left\| \phi_t(z;\ee) - H(y_1) \right\|
&\le \left\| \phi_t(z;\ee) - \phi_t(z;0) \right\|
+ \left\| \phi_t(z;0) - H(y_1) \right\| \nonumber \\
&\le \left\| \varphi_t(z;\ee) - \varphi_t(z;0) \right\|
+ \alpha {\rm e}^{-\beta t} \left\| x - H(y_1) \right\|.
\label{eq:mainBoundProof3}
\end{align}
Notice $\ee M \le 1$, thus $z \in \Omega_1$ and so by Lemma \ref{le:linearGrowth}
\begin{equation}
\left\| \phi_t(z;\ee) - H(y_1) \right\|
\le K \ee T + \alpha \ee M
= \left( \frac{1}{2} + \alpha \right) \ee M
\le \ee N,
\nonumber 
\end{equation}
for all $t \in [0,T]$.
Also $\ee N \le \delta$ thus $\psi_t(z;\ee)$ does not escape $\Omega$ for $t \in [0,T]$.
Thus $\varphi_t(z;\ee) \in \Omega_{\ee N}$ for all $t \in [0,T]$.
Also by \eqref{eq:mainBoundProof3},
\begin{equation}
\left\| \phi_T(z;\ee) - H(y_1) \right\|
\le K \ee T + \frac{1}{2} \ee M
= \ee M.
\nonumber
\end{equation}
Thus $\varphi_T(z;\ee) \in \Omega_{\ee M}$.
Hence $\varphi_t(z;\ee) \in \Omega_{\ee N}$ for all $t \ge 0$.
\end{proof}

%-------------------------------------------------------------------------------
\subsection{Consequences of a lack of global stability}
\label{sub:CaFr06}

%%%%%%%%%%%%%%%%%%%%%%%%%%%%%%%%%%%%%%%%%%%%%%%%%%%%%%%%%%%%%%%%%%%%%%%%%%%%%%%%
\begin{figure}[b!]
\begin{center}
\setlength{\unitlength}{1cm}
\begin{picture}(6.4,4.8)
\put(0,0){\includegraphics[height=4.8cm]{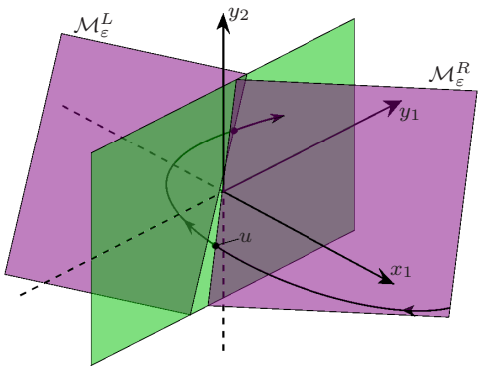}}
\put(5.17,1.3){\scriptsize $x_1$}
\put(5.29,3.4){\scriptsize $y_1$}
\put(3.02,4.72){\scriptsize $y_2$}
\put(.96,4.53){\scriptsize $\cM^L_\ee$}
\put(5.68,3.88){\scriptsize $\cM^R_\ee$}
\put(3.16,1.73){\scriptsize $u$}
\end{picture}
\caption{
A sketch of the linear, locally invariant, slow manifolds $\cM^L_\ee$ and $\cM^R_\ee$ of
the SFOCF \eqref{eq:sfocf} with $(n,k) = (3,1)$.
We also show a typical orbit; the dots indicate its intersections with
the switching manifold $x_1 = 0$.
\label{fig:linearSlowManifolds}
} 
\end{center}
\end{figure}
%%%%%%%%%%%%%%%%%%%%%%%%%%%%%%%%%%%%%%%%%%%%%%%%%%%%%%%%%%%%%%%%%%%%%%%%%%%%%%%%

On each side of the switching manifold, the SFOCF
has linear, locally invariant, slow manifolds $\cM^L_\ee$ and $\cM^R_\ee$
aligned with the slow eigenspaces of $C_L(\ee)$ and $C_R(\ee)$, Fig.~\ref{fig:linearSlowManifolds}.
These slow manifolds converge to $\cM_0$ as $\ee \to 0$, in accordance with Fenichel's theorem,
but for $\ee > 0$ intersect the switching manifold on different surfaces.

If all eigenvalues of $A_L$ and $A_R$ have negative real part,
so that $\cM^L_\ee$ and $\cM^R_\ee$ are attracting,
then as a typical orbit crosses the switching manifold from right to left at a point $u$, say,
we can assume that $u$ is very near $\cM^R_\ee$.
Since $\cM_0$ is continuous at the switching manifold,
we can further assume that $u$ is an $\cO(\ee)$ distance from $\cM^L_\ee$.

If $\cM_0$ is globally stable and $\ee$ is sufficiently small,
then from $u$ the orbit rapidly approaches $\cM^L_\ee$,
as in Fig.~\ref{fig:linearSlowManifolds}.
However, if $\cM_0$ is not globally stable,
then Theorem \ref{th:mainBound} does not apply and the orbit may be repelled from $\cM^L_\ee$.
To understand this further, consider again the layer equation \eqref{eq:limitingFastx}.
%This is an artifact of the PWL nature of the system.
As the orbit passes through $u$,
we can interpret the value of $y_1$ in \eqref{eq:limitingFastx}
as a slowly varying parameter that passes through zero.
If $\cM_0$ is not globally stable, then the size of the basin of attraction
of the equilibrium $H(y_1)$ of \eqref{eq:limitingFastx} is proportional to $|y_1|$.
Thus while the value of $y_1$ is sufficiently small,
the $x$-component of the orbit of the full system lies outside the basin of attraction of $H(y_1)$.
This instability can cause the orbit to be repelled from $\cM^L_\ee$ if $|y_1|$ does not increase too quickly.
%Alternatively, if the value of $\ee$ is not too small,
%since $|y_1|$ increases the basin of attraction of $H(y_1)$ grows
%and can engulf the orbit before it has had time to diverge.

Here we study a minimal example of this phenomenon constructed 
by choosing eigenvalues for $C_L(\ee)$ and $C_R(\ee)$
such that the matrices $A_L$, $A_R$, $B_L$, and $B_R$, have various desired properties.
%Here we consider an example for which all eigenvalues of $A_L$ and $A_R$ have negative real part,
%yet $H(0)$ is not a stable equilibrium of \eqref{eq:limitingFastx} with $y_1 = 0$.
%Here we study a minimal example of non-trivial dynamics near $\cM_0$ for
%the SFOCF \eqref{eq:sfocf} with parameters such that
%$\cM_0$ is attracting at all points except those on the switching manifold.
%We construct the example by choosing eigenvalues for $C_L(\ee)$ and $C_R(\ee)$
%such that the matrices $A_L$, $A_R$, $B_L$, and $B_R$, have desired properties.
%We construct the example by choosing desired eigenvalues for the matrices $A_L$, $A_R$, $B_L$, and $B_R$,
%and using the formulas \eqref{eq:ALAR} and \eqref{eq:BLBR} to determine the parameters in \eqref{eq:sfocf2}.
First we wish all eigenvalues of $A_L$ and $A_R$ to have negative real part,
yet $H(0)$ to be an unstable equilibrium of \eqref{eq:limitingFastx} with $y_1 = 0$.
As discussed in \S\ref{sub:expstab}, this requires $k \ge 3$. % $A_L$ and $A_R$ to be at least three-dimensional.
As in \cite{CaFr05} we let the eigenvalues of $A_L$ and $A_R$ be\removableFootnote{
This is nicer than the example of \cite{CaFr06}.
}
\begin{equation}
\begin{aligned}
\lambda^L_1 &= -0.6, &
\lambda^R_1 &= -3, \\
\lambda^L_{2,3} &= -0.2 \pm {\rm i}, & 
\lambda^R_{2,3} &= -0.1 \pm 5 {\rm i}.
\end{aligned}
\label{eq:eigCanardFast}
\end{equation}
Second we wish the reduced system on $\cM_0$ to have an attractor that involves both sides of the switching manifold.
This requires $n - k \ge 2$ (otherwise the attractor can only be an equilibrium).
So that the reduced system with $\mu > 0$ has a stable limit cycle,
we let the eigenvalues of $B_L$ and $B_R$ be
\begin{align}
\nu^L_{1,2} &= -3 \pm {\rm i}, &
\nu^R_{1,2} &= 1 \pm 2 {\rm i}.
\label{eq:eigCanardSlow}
\end{align}
%$\lambda^L_{1,2,3}$, $\ee \nu^L_{1,2}$, and $\lambda^R_{1,2,3}$, $\ee \nu^R_{1,2}$
%For $X = L,R$, we then define $a_X(\ee)$ and $b_X(\ee)$
%so that the eigenvalues of $C_X(\ee)$ are $\lambda^X_{1,2,3}$ and $\ee \nu^X_{1,2}$
We then define the entries of the first columns of $C_L(\ee)$ and $C_R(\ee)$
so that the eigenvalues of these matrices are
\eqref{eq:eigCanardFast} and $\ee$ times \eqref{eq:eigCanardSlow}\removableFootnote{
A simpler alternative is to define $a_X(\ee)$ and $b_X(\ee)$ as constants,
but in this case I couldn't get such nice numerics.
Specifically, to avoid divergence in this case I had to modify the example of \cite{CaFr05}
and the small oscillations were still too small to see ({\sc ppSlowFast3.m}).
}.

%%%%%%%%%%%%%%%%%%%%%%%%%%%%%%%%%%%%%%%%%%%%%%%%%%%%%%%%%%%%%%%%%%%%%%%%%%%%%%%%
\begin{figure}[b!]
\begin{center}
\setlength{\unitlength}{1cm}
%\begin{picture}(13.83,4)
\begin{picture}(12.5,4)
\put(0,0){\includegraphics[height=4cm]{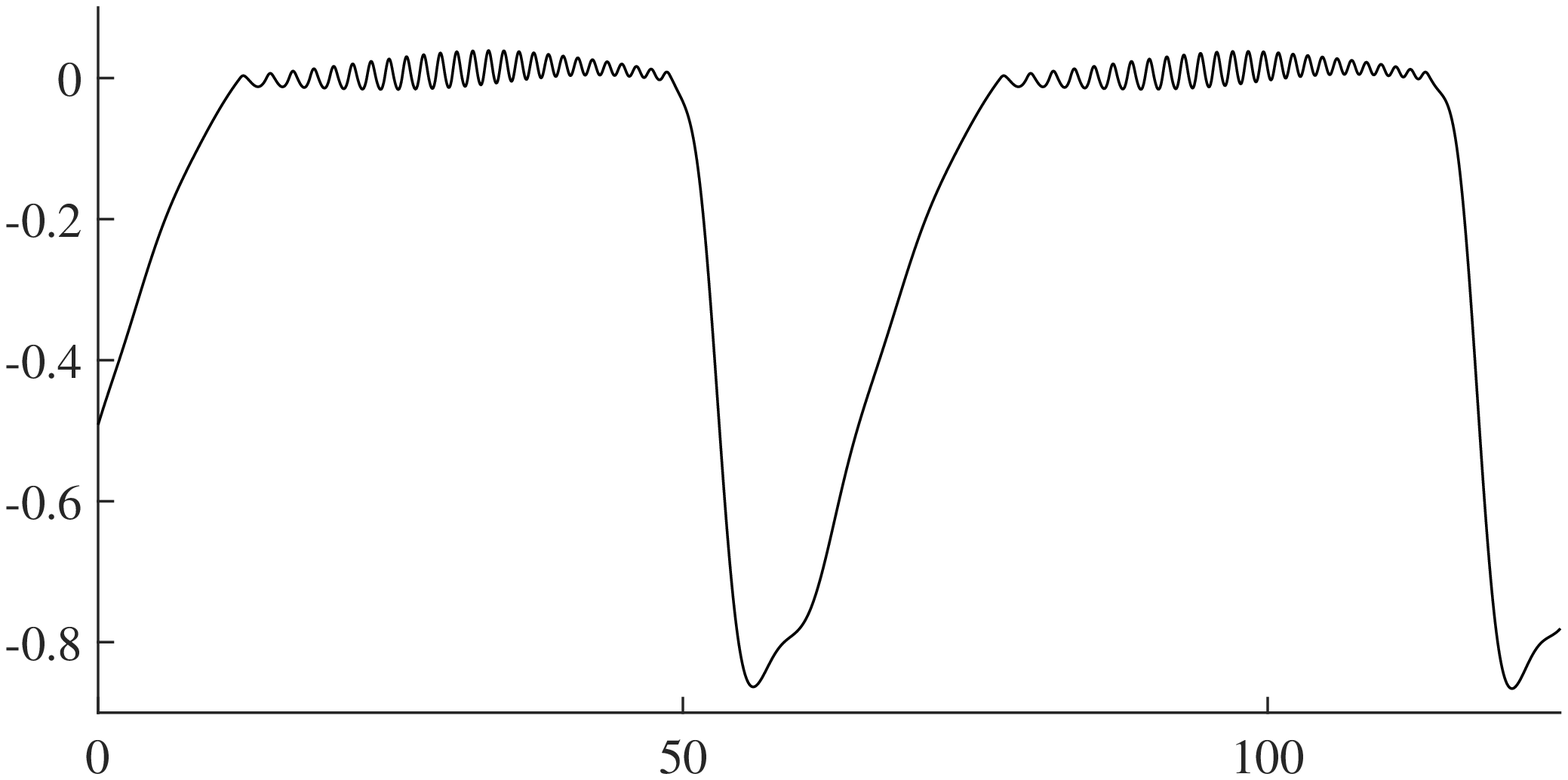}}
\put(8.5,0){\includegraphics[height=4cm]{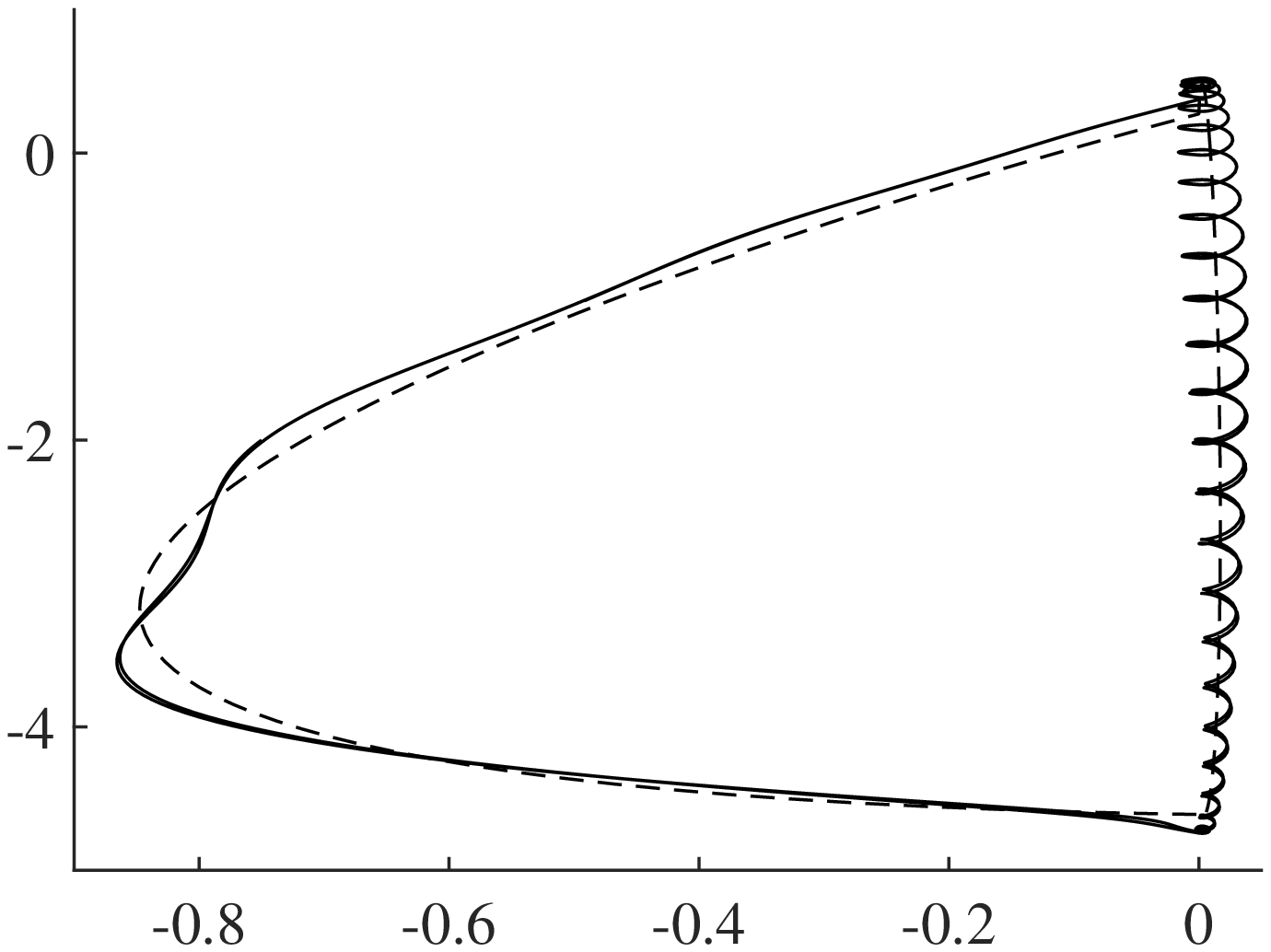}}
\put(.2,3.9){\sf \bfseries A}
\put(8.5,3.9){\sf \bfseries B}
\put(4.2,0){\small $t$}
\put(0,2.5){\small $x_1$}
\put(11.35,0){\small $x_1$}
\put(8.5,2.78){\small $y_2$}
\end{picture}
\caption{
A time series and phase portrait of the SFOCF \eqref{eq:sfocf} in five dimensions with $\ee = 0.05$ and $\mu = 1$.
For $X = L,R$, the eigenvalues of $C_X(\ee)$ are $\lambda^X_{1,2,3}$ \eqref{eq:eigCanardFast}
and $\ee \nu^X_{1,2}$ \eqref{eq:eigCanardSlow}.
Thus $(n,k) = (5,3)$ (three fast variables, two slow variables).
The dashed curve in panel B is the stable limit cycle of the reduced system \eqref{eq:limitingSlowy} on $\cM_0$.
\label{fig:canard5d}
} 
\end{center}
\end{figure}
%%%%%%%%%%%%%%%%%%%%%%%%%%%%%%%%%%%%%%%%%%%%%%%%%%%%%%%%%%%%%%%%%%%%%%%%%%%%%%%%

Numerical simulations of this system suggest that it has no bounded attractor
for sufficiently small values of $\ee > 0$.
With $\ee = 0.05$, however, typical orbits remain near $\cM_0$, see Fig.~\ref{fig:canard5d}.
This is due to competition between the attracting limit cycle of the reduced system
and the repelling attractor at infinity of the layer equations.
The value of $\ee = 0.05$ is sufficiently large that the attraction dominates the repulsion and a bounded attractor exists.

A further analysis of this system is beyond the scope of this paper.
The layer equations have an unstable invariant set bounding the basin of attraction of $H(y_1)$
that, in the full system, manifests as a repelling slow manifold.
As the value of $\ee$ is decreased from $0.05$,
we expect the attractor to be destroyed through the creation of canards.

%===============================================================================
\section{Dimension reduction of BEBs}
\label{sec:app}
\setcounter{equation}{0}

By Theorem \ref{th:mainBound}, the SFOCF \eqref{eq:sfocf}
is forward invariant in the neighbourhood $\Omega_{\ee N}$ of $\cM_0$.
In this neighbourhood, $x = H(y_1) + \cO(\ee)$.
By substituting this into \eqref{eq:sfocf}, we find that on the slow time-scale $\tau = \ee t$ we have
\begin{equation}
\frac{d y}{d \tau} = \begin{cases}
\left( B_L + \cO(\ee) \right) y + e_{n-k} \mu, & y_1 \le 0, \\
\left( B_R + \cO(\ee) \right) y + e_{n-k} \mu, & y_1 \ge 0.
\end{cases}
\label{eq:slowy}
\end{equation}
That is, the dynamics in $\Omega_{\ee N}$ is governed by a regular $\cO(\ee)$ perturbation
of \eqref{eq:limitingSlowy}.

Now consider a BEB in an $n$-dimensional system for which is it not necessarily clear
which variables are fast, or even how many are fast.
We can evaluate the Jacobian matrices of the two relevant components of the system at the bifurcation,
and write their eigenvalues 
as $\lambda^X_1,\ldots,\lambda^X_k,\ee \nu^X_i,\ldots,\ee \nu^X_{n-k}$,
for values of $k$ and $\ee$ that seem sensible.
We can then use the values $\nu^X_i,\ldots,\nu^X_{n-k}$
to construct the $(n-k)$-dimensional observer canonical form.
The idea is that this reduced system can provide
a good qualitative approximation to the dynamics of the full system near the bifurcation.
Moreover, the dynamics could be compared quantitatively if the coordinate transformations are derived explicitly.
In the next section we demonstrate this dimension reduction methodology with an example.

%-------------------------------------------------------------------------------
\subsection{Ocean circulation}
\label{sub:ocean}

Here we study the ocean circulation model of \cite{RoSa17}
\begin{equation}
\begin{split}
\dot{\overline{x}} &= (1-\overline{x}) - \ee A \overline{x} |\overline{x}-\overline{y}|, \\
\dot{\overline{y}} &= \ee \left( \overline{\mu} - \overline{y} - A \overline{y} |\overline{x}-\overline{y}| \right), \\
\dot{\overline{\mu}} &= \ee \delta \left( \lambda_0 + a \overline{x} - b \overline{y} \right),
\end{split}
\label{eq:RoSa17}
\end{equation}
where bars have been added to avoid confusion with the notation that has already been developed.
The variables $\overline{x}$ and $\overline{y}$ represent
the difference in temperature and salinity of the ocean near the equator compared to near the poles,
and $\overline{\mu}$ is a forcing ratio.
The system is piecewise-smooth due to the assumption that the motion
depends only on the magnitude of the circulation, not its direction.
The small parameter $\ee$ represents the ratio of the relaxation rate for salinity
to the relaxation rate for temperature.
The parameter $\delta$ is also small, thus \eqref{eq:RoSa17} has potentially three distinct time-scales,
but here we only consider the time-scale separation effect of $\ee$.
The remaining quantities $a$, $b$, $\lambda_0$, and $A$ are scalar parameters.

In \cite{RoSa17} the authors perform a detailed study of the nonlinear dynamics
of the reduced system defined by the limit $\ee \to 0$.
In particular they show that a stable limit cycle is created via two types of BEB.
A small amplitude oscillation is created in a Hopf-like bifurcation \cite{SiMe07},
and a relaxation oscillation is created in a bifurcation governed
by both local and global properties of the system \cite{DeFr13,RoGl14}.
Here we explain these features in the full system \eqref{eq:RoSa17} via a dimension reduction analysis of the BEBs.

%The work in \eqref{eq:RoSa17} shows that with $\delta > 0$,
%so that $\mu$ is non-constant,
%the system can exhibit stable oscillations.
%Specifically the authors perform a detailed study of the nonlinear dynamics
%of the reduced system defined by the limit $\ee \to 0$,
%to show how either a small amplitude or large amplitude stable limit cycle is generated in a BEB.
%Here we show how some of the same conclusions can be obtained through a local dimension reduction analysis of the BEBs.

%%%%%%%%%%%%%%%%%%%%%%%%%%%%%%%%%%%%%%%%%%%%%%%%%%%%%%%%%%%%%%%%%%%%%%%%%%%%%%%%
\begin{figure}[b!]
\begin{center}
\setlength{\unitlength}{1cm}
%\begin{picture}(13.83,4)
\begin{picture}(12.5,4)
\put(0,0){\includegraphics[height=4cm]{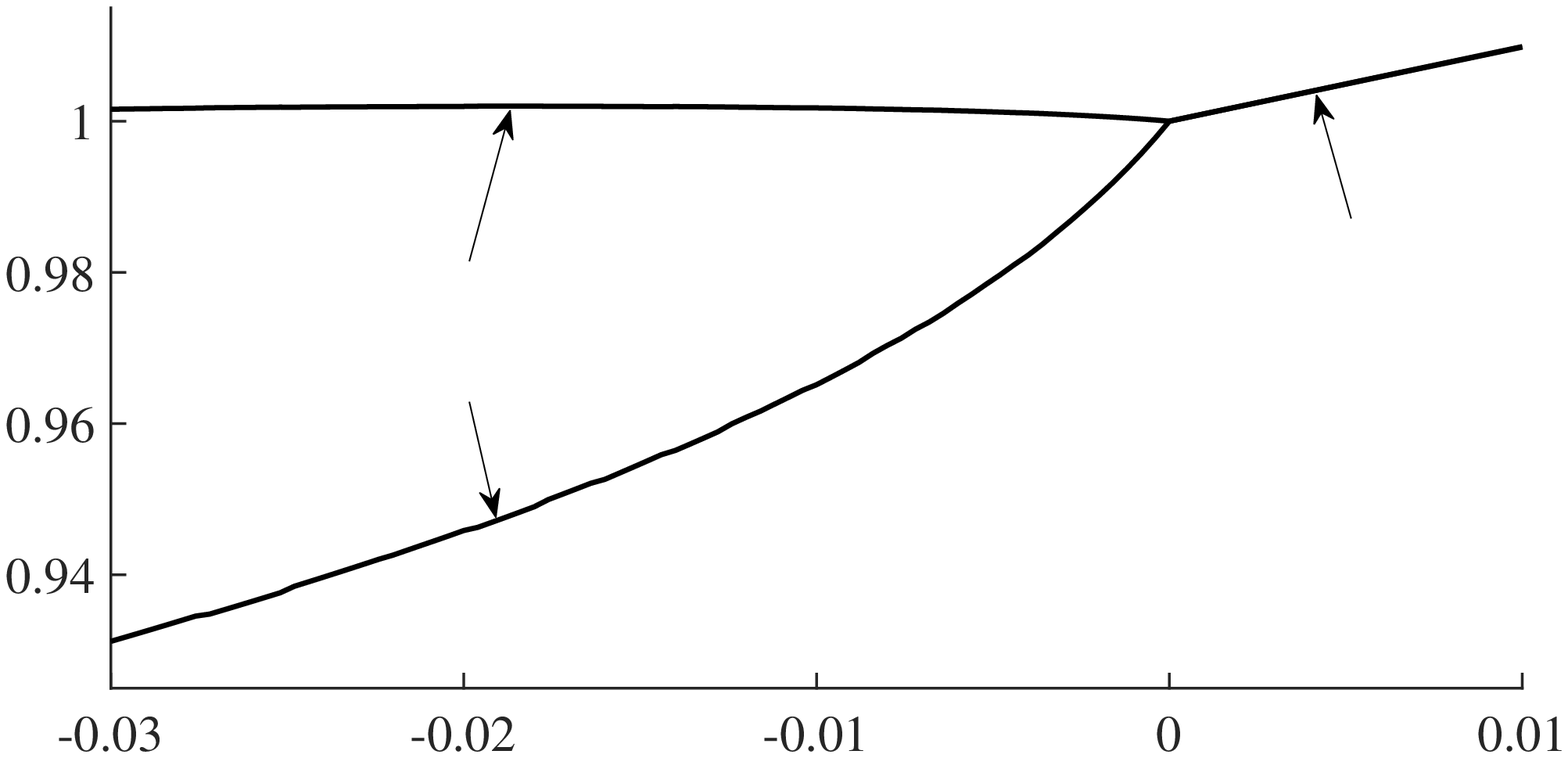}}
\put(8.5,0){\includegraphics[height=4cm]{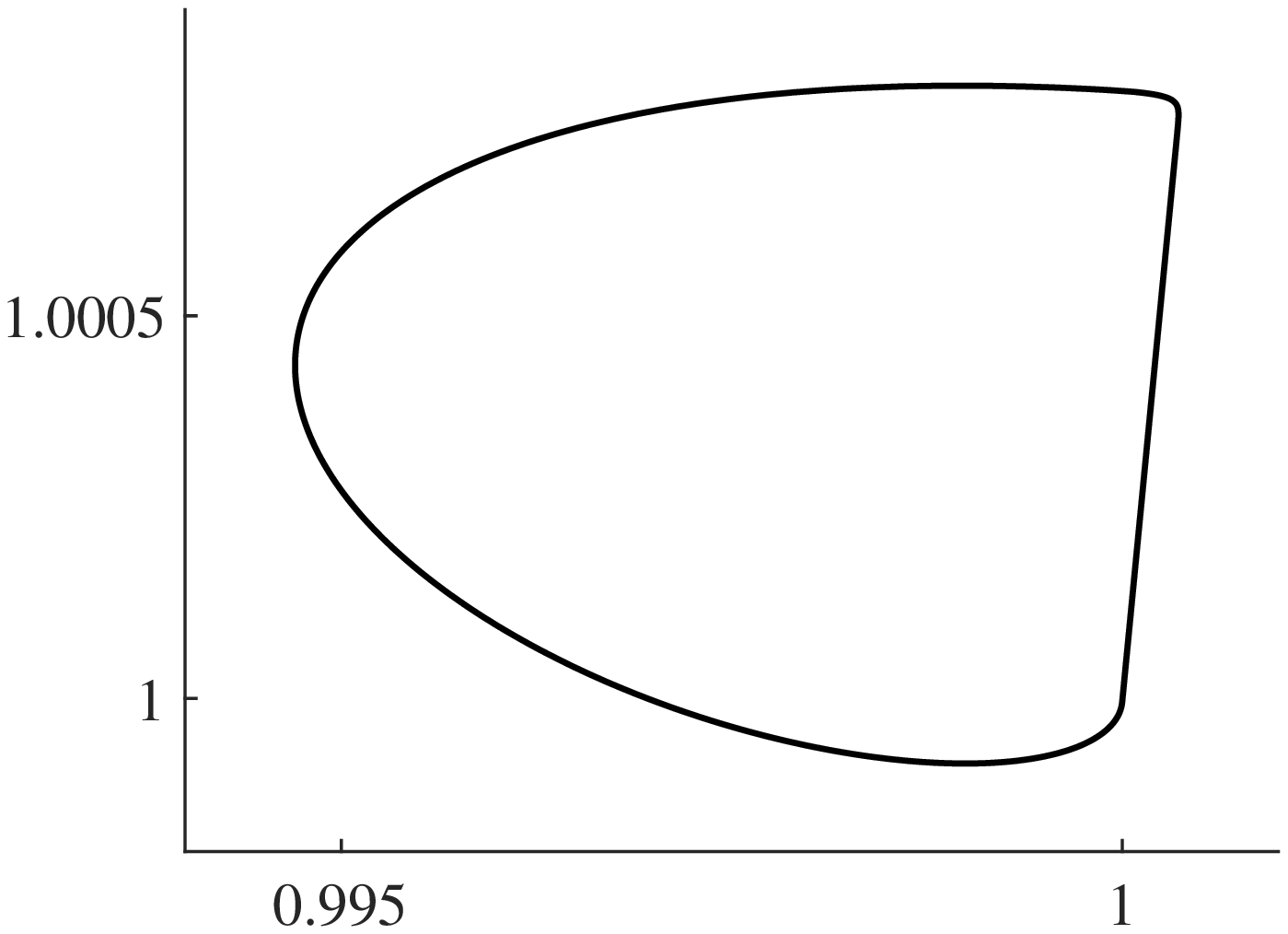}}
\put(.2,3.9){\sf \bfseries A}
\put(8.5,3.9){\sf \bfseries B}
\put(5,0){\small $\lambda_0$}
\put(0,2.3){\small $\overline{y}$}
\put(10.6,4){\footnotesize $\lambda_0 = -0.001$}
\put(11.6,0){\small $\overline{y}$}
\put(9,3.4){\small $\overline{\mu}$}
\put(2,2.54){\scriptsize limit cycle}
\put(2.19,2.23){\scriptsize bounds}
\put(6.3,2.7){\scriptsize equilibrium}
\end{picture}
\caption{
Panel A: A bifurcation diagram of \eqref{eq:RoSa17} with
\eqref{eq:RoSa17Param1}, $\ee = 0.01$, and $A = 1.1$.
Panel B: The stable limit cycle with $\lambda_0 = -0.001$.
\label{fig:bifDiag_lambda_0}
} 
\end{center}
\end{figure}
%%%%%%%%%%%%%%%%%%%%%%%%%%%%%%%%%%%%%%%%%%%%%%%%%%%%%%%%%%%%%%%%%%%%%%%%%%%%%%%%

We fix\removableFootnote{
In \cite{RoSa17} the values of $a$ and $b$ are never specified.
}
\begin{align}
a &= 1, &
b &= 1, &
\delta &= 0.01,
\label{eq:RoSa17Param1}
\end{align}
with which \eqref{eq:RoSa17} has a BEB at $(\overline{x},\overline{y},\overline{\mu}) = (1,1,1)$ when $\lambda_0 = 0$.
Fig.~\ref{fig:bifDiag_lambda_0}-A shows a bifurcation diagram illustrating the BEB for $\ee = 0.01$ and $A = 1.1$.
A stable equilibrium bifurcates to a stable limit cycle as the value of $\lambda_0$ is decreased.
This limit cycle is shown in Fig.~\ref{fig:bifDiag_lambda_0}-B for $\lambda = -0.001$.

%%%%%%%%%%%%%%%%%%%%%%%%%%%%%%%%%%%%%%%%%%%%%%%%%%%%%%%%%%%%%%%%%%%%%%%%%%%%%%%%
\begin{figure}[b!]
\begin{center}
\setlength{\unitlength}{1cm}
\begin{picture}(8,4)
\put(0,0){\includegraphics[width=8cm]{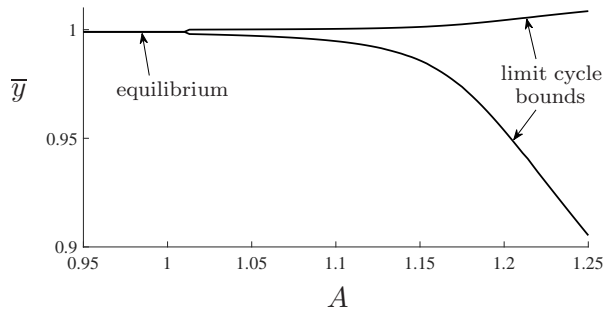}}
\put(4.2,0){\small $A$}
\put(0,2.8){\small $\overline{y}$}
\put(6.51,3.02){\scriptsize limit cycle}
\put(6.7,2.71){\scriptsize bounds}
\put(1.4,2.81){\scriptsize equilibrium}
\end{picture}
\caption{
A bifurcation diagram of \eqref{eq:RoSa17} with
\eqref{eq:RoSa17Param1}, $\ee = 0.01$, and $\lambda_0 = -0.001$.
\label{fig:bifDiag_A}
} 
\end{center}
\end{figure}
%%%%%%%%%%%%%%%%%%%%%%%%%%%%%%%%%%%%%%%%%%%%%%%%%%%%%%%%%%%%%%%%%%%%%%%%%%%%%%%%

Now if we fix $\lambda = -0.001$ and vary the value of $A$,
the size of the limit cycle changes in a nonlinear fashion.
This is shown in Fig.~\ref{fig:bifDiag_A}.
The limit cycle exists for $A > 1.01$, approximately.
Over an intermediate range of values of $A$ (say, $1.01 < A < 1.15$) the limit cycle is small.
For larger values of $A$ the size of the limit cycle increases rapidly.

To explain these observations we first employ our dimension reduction methodology numerically.
With $\ee = 0.01$ and $A = 1.1$, the two sets of eigenvalues at the BEB
are $\lambda^X_1$, $\ee \nu^X_2$, and $\ee \nu^X_3$, for $X = L,R$, where\removableFootnote{
Computed with {\sc ppEig.m}.
}
\begin{equation}
\begin{aligned}
\lambda^L_1 &= -0.9888, & \lambda^R_1 &= -1.011, \\
\nu^L_1 &= -2.108, & \nu^R_1 &= 0.04402 + 0.08919 {\rm i}, \\
\nu^L_2 &= -0.004798, & \nu^R_2 &= 0.04402 + 0.08919 {\rm i},
\end{aligned}
\label{eq:oceanEigs}
\end{equation}
to four significant figures.
Fig.~\ref{fig:bifDiag_ocean_PWL}-A shows a bifurcation diagram of the two-dimensional
reduced system \eqref{eq:limitingSlowy} where we have used the eigenvalues $\nu^L_{1,2}$ and $\nu^R_{1,2}$
to construct the companion matrices $B_L$ and $B_R$.
As expected the dynamics of the reduced system (Fig.~\ref{fig:bifDiag_ocean_PWL}) is qualitatively similar
to that of full system (Fig.~\ref{fig:bifDiag_lambda_0}).
%Indeed the comparison could be done quantitatively
%by explicitly deriving the coordinate changes described in \S\ref{sub:beb} and \S\ref{sub:sfocf}.

%%%%%%%%%%%%%%%%%%%%%%%%%%%%%%%%%%%%%%%%%%%%%%%%%%%%%%%%%%%%%%%%%%%%%%%%%%%%%%%%
\begin{figure}[b!]
\begin{center}
\setlength{\unitlength}{1cm}
%\begin{picture}(13.83,4)
\begin{picture}(12.5,4)
\put(0,0){\includegraphics[height=4cm]{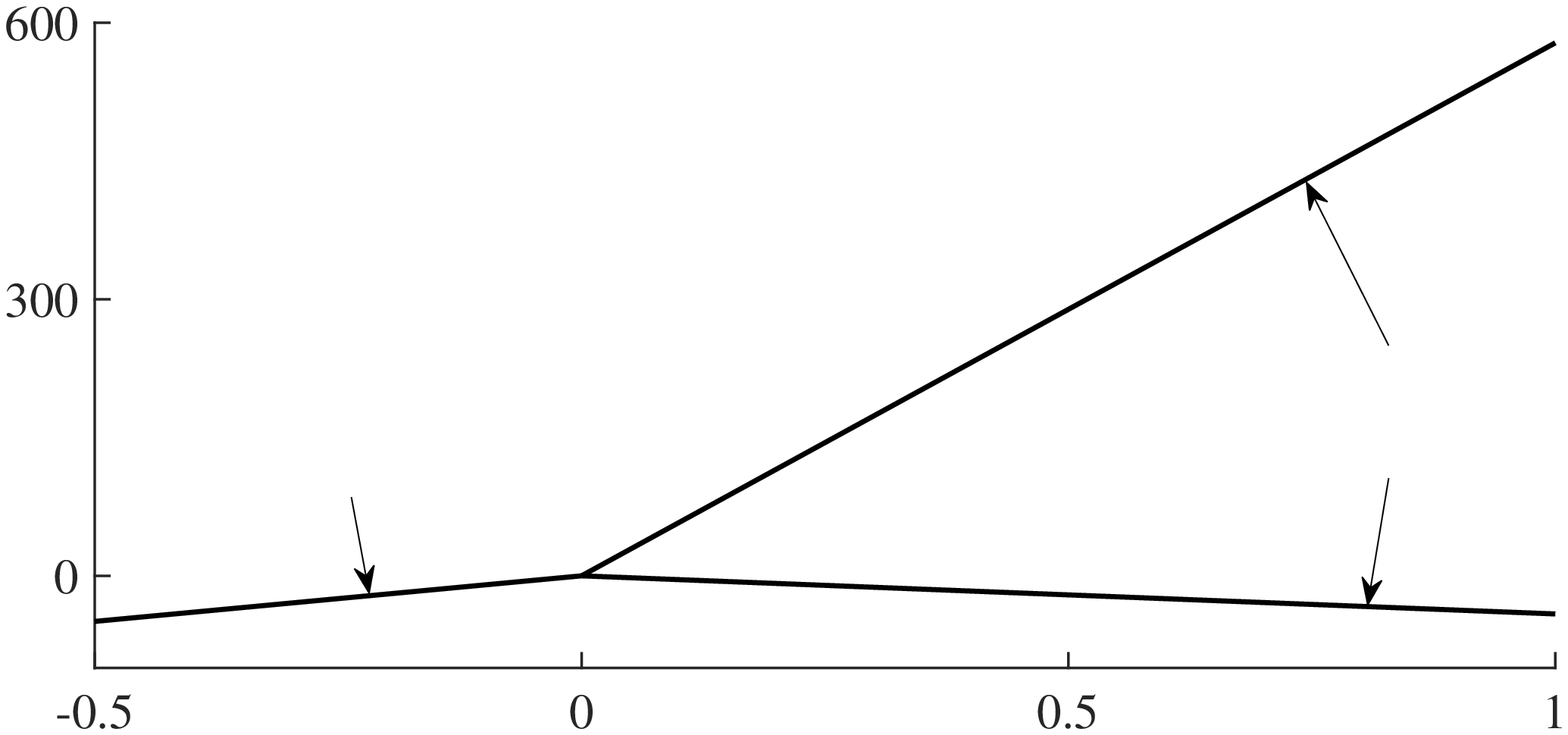}}
\put(8.5,0){\includegraphics[height=4cm]{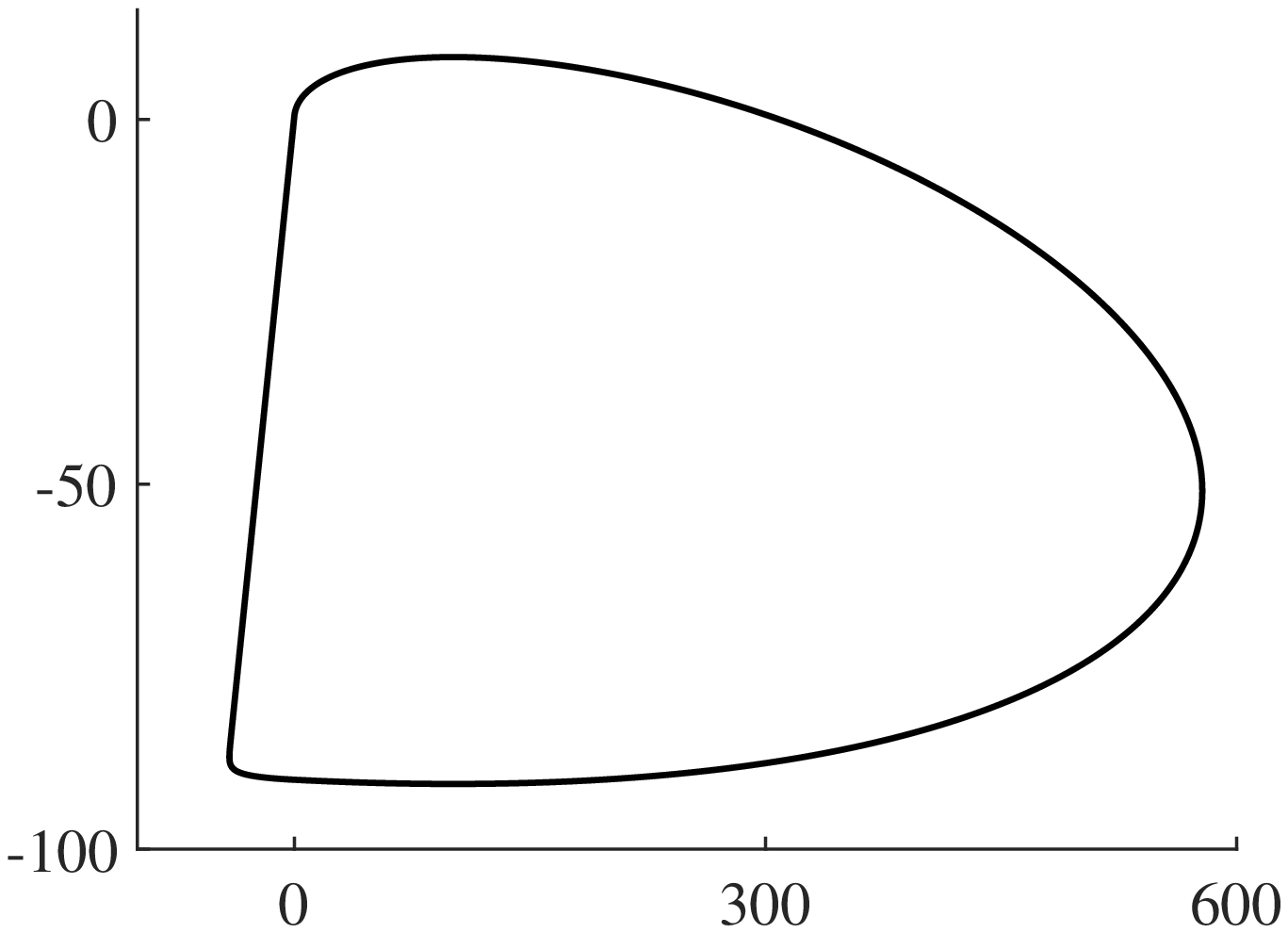}}
\put(0,3.9){\sf \bfseries A}
\put(8.5,3.9){\sf \bfseries B}
\put(4.2,0){\small $\mu$}
\put(0,3.1){\small $y_1$}
\put(11.3,4){\footnotesize $\mu = 1$}
\put(11.55,0){\small $y_1$}
\put(8.5,2.8){\small $y_2$}
\put(6.51,2.03){\scriptsize limit cycle}
\put(6.7,1.72){\scriptsize bounds}
\put(1.4,1.65){\scriptsize equilibrium}
\end{picture}
\caption{
Panel A: A bifurcation diagram of the reduced system \eqref{eq:limitingSlowy}
where $B_L$ and $B_R$ are of the form \eqref{eq:BLBR}
and have eigenvalues $\nu^L_{1,2}$ and $\nu^R_{1,2}$ given by \eqref{eq:oceanEigs}.
Panel B: The limit cycle for $\mu = 1$.
\label{fig:bifDiag_ocean_PWL}
} 
\end{center}
\end{figure}
%%%%%%%%%%%%%%%%%%%%%%%%%%%%%%%%%%%%%%%%%%%%%%%%%%%%%%%%%%%%%%%%%%%%%%%%%%%%%%%%

To understand the effects of the parameter $A$, we next study the reduced system analytically.
Let $f_L(\overline{x},\overline{y},\overline{\mu})$
and $f_R(\overline{x},\overline{y},\overline{\mu})$ denote the smooth components of the right hand side of \eqref{eq:RoSa17}.
At $(\overline{x},\overline{y},\overline{\mu}) = (1,1,1)$ (where the BEB occurs),
the characteristic polynomials of $D f_L$ and $D f_R$ are
%\begin{equation}
%D f = \left[ \begin{array}{ccc}
%-1 \mp \ee A & \pm \ee A & 0 \\
%\mp \ee A & -\ee \pm \ee A & \ee \\
%a \ee \delta & -b \ee \delta & 0
%\end{array} \right]
%\label{eq:}
%\end{equation}
\begin{equation}
\det \left( \lambda I - D f_{L,R} \right) =
\lambda^3 + \left( 1 + \cO(\ee) \right) \lambda^2
+ \left( (1 \pm A) \ee + \cO \left( \ee^2 \right) \right) \lambda
+ b \delta \ee^2 + \cO \left( \ee^3 \right).
\end{equation}
%Therefore the SFOCF \eqref{eq:sfocf} that corresponds to this BEB has
Therefore the reduced system \eqref{eq:limitingSlowy}
that corresponds to this BEB has matrices $B_L$ and $B_R$ given by the formulas \eqref{eq:BLBR} using
\begin{equation}
\begin{aligned}
a^L(0) &= 1, &
a^R(0) &= 1, \\
b^L(0) &= \left[ \begin{array}{c} 1+A \\ b \delta \end{array} \right], &
b^R(0) &= \left[ \begin{array}{c} 1-A \\ b \delta \end{array} \right].
\end{aligned}
%\label{eq:oceanaXbX}
\nonumber
\end{equation}
%The reduced system is \eqref{eq:limitingSlowy} where $B_L$ and $B_R$ 
%are given by \eqref{eq:BLBR} with \eqref{eq:oceanaXbX}.

As explained in \cite{FrPo98,SiMe12},
the dynamics of two-dimensional, PWL continuous systems, such as the reduced system in this example,
can be determined by classifying each component of the system
as either an attracting or repelling focus or node, or a saddle.
The $y_1 \le 0$ component of the reduced system is an attracting node,
and if $1 < A < 1 + 2 \sqrt{b \delta}$ (here $1 + 2 \sqrt{b \delta} = 1.2$)
then the $y_1 \ge 0$ component is a repelling focus.
The theory of \cite{FrPo98,SiMe12} tells us that in
this scenario a small amplitude oscillation is created in a Hopf-like bifurcation.
If $A > 1 + 2 \sqrt{b \delta}$, then the $y_1 \ge 0$ component is a repelling node.
In this case no limit cycle is created locally due to the presence of real-valued eigenvectors.
This analysis tells us that, in the limit $\ee \to 0$,
a limit cycle is created locally only for $1 < A < 1.2$.
This explains the nonlinear behaviour observed in Fig.~\ref{fig:bifDiag_A}.
For $A > 1.2$ a relaxation oscillation is created in the full system due to global features.

%===============================================================================
\section{Discussion}
\label{sec:conc}
\setcounter{equation}{0}

Although bifurcation theory for piecewise-smooth dynamical systems has matured greatly in recent years,
there remains a critical need to understand the bifurcations of such systems when the number of dimensions is large.
This paper makes a step to addressing this problem by giving conditions under which
dimension reduction is possible for BEBs of continuous systems.

The main result (Theorem \ref{th:mainBound}) is akin to Fenichel's theorem \cite{Fe71}
in that it gives conditions under which the slow dynamics
evolves according to a regular perturbation of the reduced system.
Whereas Fenichel's theorem also guarantees the existence of a slow manifold $\cM_\ee$ near the critical manifold $\cM_0$,
here we have only been able to demonstrate the existence of a forward invariant set $\Omega_{\ee N}$ near $\cM_0$
because normal hyperbolicity is not satisfied on the switching manifold.

It remains to understand what invariant objects can exist within $\Omega_{\ee N}$,
as this may have important consequences for larger values of $\ee$,
and to more completely understand the unique effects that the switching manifold can create
by proving or disproving Conjecture \ref{cj:globalStability} and
more thoroughly investigating the mixed-mode oscillations shown in \S\ref{sub:CaFr06}.
Throughout this paper we have assumed that $\cM_0$ is attracting so that the main result can be achieved.
If $\cM_0$ is repelling then the existence of a backwards invariant set can be demonstrated via a simple time-reversal,
but it remains to determine what can be said in cases for which $\cM_0$ is of saddle-type.

\appendix
%===============================================================================
\section{Proof of Proposition \ref{pr:ocf}}
\label{app:ocf}
\setcounter{equation}{0}

We evidently have $e_1^{\sf T} Q = e_1^{\sf T}$ and $e_1^{\sf T} d = 0$.
Thus $z_1 = \tilde{z}_1$, and so by directly applying the transformation
\eqref{eq:transocf} to \eqref{eq:sfpwl2} we obtain
\begin{equation}
\dot{z} = \begin{cases}
Q P_L Q^{-1} z + \frac{1}{s} \left( Q c - Q P_L Q^{-1} d \right) \mu, & z_1 \le 0, \\
Q P_R Q^{-1} z + \frac{1}{s} \left( Q c - Q P_R Q^{-1} d \right) \mu, & z_1 \ge 0.
\end{cases}
\label{eq:ocfProof1}
\end{equation}

First we show that
\begin{equation}
Q P_L Q^{-1} = J_n - p^L e_1^{\sf T}.
\label{eq:ocfProof2}
\end{equation}
Direct calculations yield
\begin{align}
J_n Q &= \left[ \begin{array}{c}
p^L_1 e_1^{\sf T} + e_1^{\sf T} P_L \\
p^L_2 e_1^{\sf T} + p^L_1 e_1^{\sf T} P_L + e_1^{\sf T} P_L^2 \\
\vdots \\
p^L_{n-1} e_1^{\sf T} + \cdots + p^L_1 e_1^{\sf T} P_L^{n-2} + e_1^{\sf T} P_L^{n-1} \\
0 \end{array} \right],
\nonumber \\
Q P_L &= \left[ \begin{array}{c}
e_1^{\sf T} P_L \\
p^L_1 e_1^{\sf T} P_L + e_1^{\sf T} P_L^2 \\
\vdots \\
p^L_{n-1} e_1^{\sf T} P_L + \cdots + p^L_1 e_1^{\sf T} P_L^{n-1} + e_1^{\sf T} P_L^n
\end{array} \right].
\nonumber
\end{align}
Thus
\begin{equation}
J_n Q - Q P_L = \left[ \begin{array}{c}
p^L_1 e_1^{\sf T} \\
\vdots \\
p^L_{n-1} e_1^{\sf T} \\
-e_1^{\sf T} \left( P_L^n + p^L_1 P_L^{n-1} + \cdots + p^L_{n-1} P_L \right)
\end{array} \right].
\label{eq:ocfProof3}
\end{equation}
By the Cayley-Hamilton theorem,
$P_L^n + p^L_1 P_L^{n-1} + \cdots + p^L_{n-1} P_L + p^L_n I$ is the zero matrix,
thus the last component of \eqref{eq:ocfProof3} is $p^L_n e_1^{\sf T}$.
Thus $J_n Q - Q P_L = p^L e_1^{\sf T}$, which verifies \eqref{eq:ocfProof2}.

Next we show that
\begin{equation}
Q P_R Q^{-1} = J_n - p^R e_1^{\sf T}.
\label{eq:ocfProof1b}
\end{equation}
Recall, since \eqref{eq:sfpwl2} is continuous, $P_L$ and $P_R$ differ in only their first columns.
Thus $P_R = P_L + \xi e_1^{\sf T}$, for some $\xi \in \mathbb{R}^n$, and so
$Q P_R Q^{-1} = Q P_L Q^{-1} + Q \xi e_1^{\sf T} Q^{-1}$.
Then substituting \eqref{eq:ocfProof2} and $e_1^{\sf T} Q^{-1} = e_1^{\sf T}$
(a consequence of $e_1^{\sf T} Q = e_1^{\sf T}$) gives
$Q P_R Q^{-1} = J_n + (Q \xi - p^L) e_1^{\sf T}$.
This shows that $Q P_R Q^{-1}$ is a companion matrix.
Characteristic polynomials are invariant under similarity transformations,
thus $Q P_R Q^{-1}$ must be the companion matrix $J_n - p^R e_1^{\sf T}$, as in \eqref{eq:ocfProof1b}.

Finally we show that
\begin{equation}
\frac{1}{s} \left( Q c - Q P_X Q^{-1} d \right) = e_n \,,
\label{eq:ocfProof4}
\end{equation}
for both $X = L$ and $X = R$.
By \eqref{eq:ocfProof2} and the definition of $d$ \eqref{eq:Qds}, we have
$Q P_X Q^{-1} d = \left( J_n - p^X e_1^{\sf T} \right) J_n^{\sf T} Q c$.
Substituting $J_n J_n^{\sf T} = I - e_n e_n^{\sf T}$ and $e_1^{\sf T} J_n^{\sf T} = 0$ gives
$Q P_X Q^{-1} d = \left( I - e_n e_n^{\sf T} \right) Q c$.
This produces \eqref{eq:ocfProof4} upon also applying the definition of $s$ \eqref{eq:Qds}.

This establishes that \eqref{eq:ocfProof1} simplifies to \eqref{eq:ocf} as required.
\hfill $\Box$

%===============================================================================
\section{Proof of Proposition \ref{pr:sfocf}}
\label{app:sfocf}
\setcounter{equation}{0}

Since $z_1 = \tilde{z}_1$, by directly applying the
transformation \eqref{eq:transsfocf} to \eqref{eq:sfpwl2} we obtain
\begin{equation}
\dot{z} = \begin{cases}
E^{-1} Q P_L Q^{-1} E z + \frac{\ee^{n-k}}{s} \left( E^{-1} Q c - E^{-1} Q P_L Q^{-1} d \right) \mu, & z_1 \le 0, \\
E^{-1} Q P_R Q^{-1} E z + \frac{\ee^{n-k}}{s} \left( E^{-1} Q c - E^{-1} Q P_R Q^{-1} d \right) \mu, & z_1 \ge 0.
\end{cases}
\label{eq:sfocfProof1}
\end{equation}
To show that \eqref{eq:sfocfProof1} simplifies to \eqref{eq:sfocf} 
we simply insert formulas established in the proof of Proposition \ref{pr:ocf}.
By \eqref{eq:ocfProof2} and \eqref{eq:ocfProof1b},
$Q P_L Q^{-1}$ and $Q P_R Q^{-1}$ are companion matrices.
Earlier we remarked that $E C_X E^{-1}$ is a companion matrix,
thus we must have $E^{-1} Q P_L Q^{-1} E = C_L$ and $E^{-1} Q P_R Q^{-1} E = C_R$.
Also, by \eqref{eq:ocfProof4},
\begin{equation}
\frac{\ee^{n-k}}{s} \left( E^{-1} Q c - E^{-1} Q P_L Q^{-1} d \right) = \ee^{n-k} E^{-1} e_n \,,
\label{eq:sfocfProof2}
\end{equation}
for both $X = L$ and $X = R$.
Since $E^{-1} e_n = \frac{1}{\ee^{n-k-1}} e_n$,
the expression \eqref{eq:sfocfProof2} reduces to $\ee e_n$,
which completes our demonstration of \eqref{eq:sfocf}.
\hfill $\Box$

%===============================================================================
\section{Proof of Lemma \ref{le:expStab}}
\label{app:expStab}
\setcounter{equation}{0}

Let $f(x;y_1)$ denote the right-hand side of \eqref{eq:limitingFastx},
and let $\overline{B}_\delta(x)$ denote the closed ball of radius $\delta$ centred at $x$.

Suppose $\cM_0$ is globally stable.
To complete the proof we show that $\cM_0$ is globally exponentially stable
(as the converse is trivial).

We first consider \eqref{eq:limitingFastx} with $y_1 = 0$.
By assumption, $H(0) = 0$ is asymptotically stable,
thus there exists $\delta > 0$ (with $\delta \le 1$) such that
\begin{equation}
\left\| \phi_t(x;0) \right\| \le 1, \quad
{\rm for~all~} x \in \overline{B}_\delta(0), {\rm ~and~all~} t \ge 0,
\label{eq:expStabProof1}
\end{equation}
and $\phi_t(x;0) \to 0$ as $t \to \infty$ for all $x \in \overline{B}_\delta(0)$.
Moreover, this convergence is uniform because $\overline{B}_\delta(0)$ is compact
(see \cite{DiNo08,Si16d} for detailed demonstrations of this in similar contexts
through use of the Arzel\`{a}-Ascoli theorem).
Thus there exists $T > 0$ such that
\begin{equation}
\left\| \phi_T(x;0) \right\| \le \frac{\delta}{2}, \quad
{\rm for~all~} x \in \overline{B}_\delta(0).
\label{eq:expStabProof2}
\end{equation}
We now show that, with $y_1 = 0$, \eqref{eq:expStab} holds using
$\alpha = \alpha_0 = \frac{2}{\delta}$ and $\beta = \beta_0 = \frac{1}{T} \ln(2)$.

Notice $f(x;0)$ is linearly homogeneous in the sense that
$\xi f(x;0) = f(\xi x;0)$ for all $x \in \mathbb{R}^n$ and all $\xi \ge 0$.
Thus the flow is similarly linearly homogeneous:
\begin{equation}
\xi \phi_t(x;0) = \phi_t(\xi x;0), \quad
{\rm for~all~} x \in \mathbb{R}^n {\rm ~and~all~} \xi \ge 0.
\label{eq:linearHomogeneity}
\end{equation}
For any $x \in \mathbb{R}^n \setminus \{ 0 \}$,
putting $\xi = \frac{\delta}{\| x \|}$ gives $\xi x \in \overline{B}_\delta(0)$,
and so $\left\| \phi_t(\xi x;0) \right\| \le 1$ for all $t \ge 0$
and $\left\| \phi_T(\xi x;0) \right\| \le \frac{\delta}{2}$.
Thus by \eqref{eq:linearHomogeneity},
\begin{equation}
\left\| \phi_t(x;0) \right\| \le \frac{\| x \|}{\delta}, \quad
{\rm for~all~} x \in \mathbb{R}^n, {\rm ~and~all~} t \ge 0,
\label{eq:expStabProof1b}
\end{equation}
and
\begin{equation}
\left\| \phi_T(x;0) \right\| \le \frac{\| x \|}{2}, \quad
{\rm for~all~} x \in \mathbb{R}^n.
\label{eq:expStabProof2b}
\end{equation}
By repeatedly applying \eqref{eq:expStabProof1b} and \eqref{eq:expStabProof2b}
we deduce that for all $t = j T + s$, where $j \in \mathbb{Z}$ is positive and $s \in [0,T]$, we have
\begin{equation}
\left\| \phi_t(x;0) \right\| \le \frac{\| x \|}{2^j \delta}.
\label{eq:expStabProof3}
\end{equation}
Then
\begin{equation}
\frac{\| x \|}{2^j \delta}
= \frac{2^{\frac{s-t}{T}} \| x \|}{\delta}
\le \frac{2^{1 - \frac{t}{T}} \| x \|}{\delta}
= \alpha_0 {\rm e}^{-\beta_0 t} \| x \|,
\nonumber
\end{equation}
giving the desired result.

Now we consider \eqref{eq:limitingFastx} with $y_1 = 1$.
Since $H(1)$ is asymptotically stable and does not belong to the switching manifold,
there exists a neighbourhood $\Omega$ of $H(1)$ that does not intersect the switching manifold and for which
\begin{equation}
\phi_t(x;1) \in \Omega, \quad
{\rm for~all~} x \in \Omega, {\rm ~and~all~} t \ge 0.
\label{eq:expStabProof11}
\end{equation}
Since $f$ is linear in $\Omega$, \eqref{eq:expStab} is satisfied
for some $a = \hat{a}$ and $b = \hat{b}$, see for instance \cite{Me07}\removableFootnote{
Specifically Lemma 2.9.
}.

To deal with initial points outside of $\Omega$,
observe that since \eqref{eq:limitingFastx} is PWL, it is also Lipschitz.
That is, there exists $K \in \mathbb{R}$ such that
\begin{equation}
\left\| f(x;1) - f(y;1) \right\| \le K \| x - y \|, \quad
{\rm for~all~} x,y \in \mathbb{R}^n.
\label{eq:LipschitzPWL}
\end{equation}
Let $T_1 = \frac{1}{\beta_0} \ln(3 \alpha_0)$.
Let $M = 3 T_1 {\rm e}^{K T_1}$, and assume $M$ is large enough that $\Omega \subset \overline{B}_M(0)$,
see Fig.~\ref{fig:schemCompactSets}.
Since $H(1) \in \Omega$ is globally asymptotically stable and $\overline{B}_M(0)$ is compact,
there exists $T_2 \in \mathbb{R}$ such that
\begin{equation}
\phi_{T_2}(x;1) \in \Omega, \quad
{\rm for~all~} x \in \overline{B}_M(0).
\label{eq:expStabProof12}
\end{equation}

%%%%%%%%%%%%%%%%%%%%%%%%%%%%%%%%%%%%%%%%%%%%%%%%%%%%%%%%%%%%%%%%%%%%%%%%%%%%%%%%
\begin{figure}[b!]
\begin{center}
\setlength{\unitlength}{1cm}
\begin{picture}(6,6)
\put(0,0){\includegraphics[height=6cm]{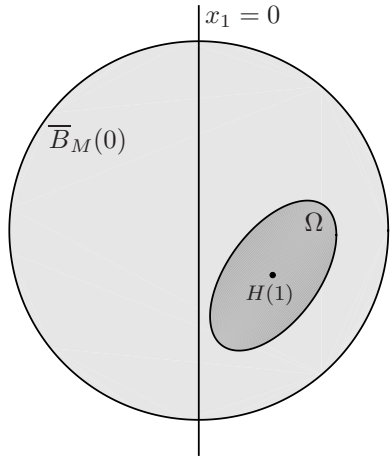}}
\put(3.08,5.76){\footnotesize $x_1 = 0$}
\put(1,4.1){\footnotesize $\overline{B}_M(0)$}
\put(4.4,3){\footnotesize $\Omega$}
\put(3.62,2.08){\scriptsize $H(1)$}
\end{picture}
\caption{
A sketch of sets introduced in the proof of Lemma \ref{le:expStab}.
\label{fig:schemCompactSets}
} 
\end{center}
\end{figure}
%%%%%%%%%%%%%%%%%%%%%%%%%%%%%%%%%%%%%%%%%%%%%%%%%%%%%%%%%%%%%%%%%%%%%%%%%%%%%%%%

To deal with initial points outside $\overline{B}_M(0)$,
we approximate $f(x;1)$ with $f(x;0)$ 
and use the already established global exponential stability of $f(x;0)$.
For any $x \in \mathbb{R}^n$\removableFootnote{
This calculation could be seen as a consequence of my other Lemma.
}:
\begin{align}
\left\| \phi_{T_1}(x;1) - \phi_{T_1}(x;0) \right\|
&= \left\| \int_0^{T_1} f \left( \phi_t(x;1);1 \right)
- f \left( \phi_t(x;0);0 \right) \,dt \right\| \nonumber \\
&\le \int_0^{T_1} \left\| f \left( \phi_t(x;1);1 \right) - f \left( \phi_t(x;0);1 \right) \right\| \,dt \nonumber \\
&\quad+ \int_0^{T_1} \left\| f \left( \phi_t(x;0);1 \right) - f \left( \phi_t(x;0);0 \right) \right\| \,dt.
\label{eq:expStabProof13}
\end{align}
To the first integral in \eqref{eq:expStabProof13}
we apply the Lipschitz property \eqref{eq:LipschitzPWL}.
For the second integral observe that the integrand is simply $\| e_k \| = 1$.
Thus we have
\begin{equation}
\left\| \phi_{T_1}(x;1) - \phi_{T_1}(x;0) \right\| \le
T_1 + K \int_0^{T_1} \left\| \phi_t(x;1) - \phi_t(x;0) \right\| \,dt.
\label{eq:expStabProof14}
\end{equation}
By Gr\"{o}nwall's inequality \cite{Me07},
\begin{equation}
\left\| \phi_{T_1}(x;1) - \phi_{T_1}(x;0) \right\| \le
T_1 {\rm e}^{K T_1} = \frac{M}{3}.
%\label{eq:expStabProof14}
\nonumber
\end{equation}
Then, by the definition of $T_1$,
\begin{equation}
\left\| \phi_{T_1}(x;1) \right\|
\le \left\| \phi_{T_1}(x;1) - \phi_{T_1}(x;0) \right\| + \left\| \phi_{T_1}(x;0) \right\|
\le \frac{M}{3} + \frac{\| x \|}{3}.
\label{eq:expStabProof15}
\end{equation}

If $\| x \| \le M$ then 
$\left\| \phi_{T_1}(x;1) \right\| \le \frac{2 M}{3}$.
Therefore, until reaching $\overline{B}_M(0)$,
the norm of $\phi_t(x;1)$ is bounded by an exponentially decaying function
(specifically $\left\| \phi_t(x;1) \right\| \le \alpha_0 {\rm e}^{\frac{-1}{T_1} \ln \left( \frac{3 \alpha_0}{2} \right) t}$).
After $\phi_t(x;1)$ enters $\overline{B}_M(0)$,
it reaches $\Omega$ within the time $T_2$, see \eqref{eq:expStabProof12},
after which is decays exponentially to $H(1)$.
This shows that, for $y_1 = 1$, \eqref{eq:expStab} holds for some
$\alpha = \alpha_1 \ge 1$ and $\beta = \beta_1 > 0$.
	
Finally we show that \eqref{eq:expStab} holds
for all $y_1 > 0$ by using the PWL nature of $f$.
Specifically, $\phi_t(x;y_1) = y_1 \phi_t \left( \frac{x}{y_1}; 1 \right)$, and so
\begin{equation}
\left\| \phi_t(x;y_1) - H(y_1) \right\|
= y_1 \left\| \phi_t \left( \frac{x}{y_1}; 1 \right) - H(1) \right\|
\le y_1 \alpha_1 {\rm e}^{\beta_1 t} \left\| \frac{x}{y_1} - H(1) \right\|
= \alpha_1 {\rm e}^{\beta_1 t} \left\| x - H(y_1) \right\|.
\nonumber
\end{equation}
By symmetry, \eqref{eq:expStab} holds for all $y_1 < 0$ for some
$\alpha = \alpha_{-1} \ge 1$ and $\beta = \beta_{-1} > 0$.
Thus \eqref{eq:expStab} holds for all $y_1 \in \mathbb{R}$
with $\alpha = \max \left[ \alpha_{-1}, \alpha_0, \alpha_1 \right]$
and $\beta = \min \left[ \beta_{-1}, \beta_0, \beta_1 \right]$,
That is, $\cM_0$ is globally exponentially stable.
\hfill $\Box$

%===============================================================================
\section{Proof of Lemma \ref{le:linearGrowth}}
\label{app:linearGrowth}
\setcounter{equation}{0}

Let
\begin{equation}
K_1 = \max_{z \in \Omega_1,\, t \in [0,T]} \left\| \varphi_t(z;0) \right\|.
\nonumber
\end{equation}
The matrices $C_L(\ee)$ and $C_R(\ee)$ are continuous functions of $\ee$ on the compact set $[0,\ee_1]$,
and so are bounded.
The matrices $C_L(\ee)$ and $C_R(\ee)$ are also differentiable at $\ee = 0$, hence the spectral norms
\begin{equation}
\frac{1}{\ee} \left\| C_L(\ee) - C_L(0) \right\|_2, \qquad
\frac{1}{\ee} \left\| C_R(\ee) - C_R(0) \right\|_2,
\nonumber
\end{equation}
are bounded on $(0,\ee_1]$ by some constant $K_2 \in \mathbb{R}$.

Let $f(z;\ee)$ denote the right hand side of \eqref{eq:sfocf}.
For any $z \in \mathbb{R}^n$ and any $\ee \in [0,\ee_1]$, 
\begin{equation}
\left\| f(z;\ee) - f(z;0) \right\|
\le \left\| \big( C_X(\ee) - C_X(0) \big) z \right\|
\le \| z \| K_2 \ee,
\label{eq:linearGrowthProof1}
\end{equation}
where, in the intermediate expression, $X = L$ if $x_1 \le 0$ and $X = R$ otherwise.
Also, $f$ is Lipschitz in $z$ and
the Lipschitz constant can be chosen independent of $\ee$
because $C_L(\ee)$ and $C_R(\ee)$ are bounded.
That is, there exists $K_3 \in \mathbb{R}$ such that
\begin{equation}
\left\| f(w;\ee) - f(z;\ee) \right\| \le K_3 \| w - z \|, \quad
{\rm for~all~} w,z \in \mathbb{R}^n,
{\rm ~and~all~} \ee \in [0,\ee_1].
\label{eq:linearGrowthProof2}
\end{equation}

Let $K = K_1 K_2 {\rm e}^{K_3 T}$.
Choose any $z \in \Omega_1$, $\ee \in [0,\ee_1]$, and $t \in [0,T]$.
Then
\begin{align}
\left\| \varphi_t(z;\ee) - \varphi_t(z;0) \right\|
&= \left\| \int_0^t f \left( \varphi_s(z;\ee);\ee \right)
- f \left( \varphi_s(z;0);0 \right) \,ds \right\| \nonumber \\
&\le \int_0^t \left\| f \left( \varphi_s(z;\ee);\ee \right)
- f \left( \varphi_s(z;0);\ee \right) \right\| \,ds \nonumber \\
&\quad+ \int_0^t \left\| f \left( \varphi_s(z;0);\ee \right)
- f \left( \varphi_s(z;0);0 \right) \right\| \,ds \nonumber \\
&= K_3 \int_0^t \left\| \varphi_s(z;\ee) - \varphi_s(z;0) \right\| \,ds
+ K_1 K_2 \ee t,
\nonumber
\end{align}
and so by Gr\"{o}nwall's inequality \cite{Me07} we have
\begin{equation}
\left\| \varphi_t(z;\ee) - \varphi_t(z;0) \right\|
\le K_1 K_2 \ee t {\rm e}^{K_3 t}
\le K \ee t,
\nonumber
\end{equation}
as required.
\hfill $\Box$

%{\footnotesize
%\bibliographystyle{unsrt}
%\bibliography{../DynSyst,../MathBio,../Misc,../OtherTheory,../PWS,../Stoch}
%}

\end{document}